

\documentclass[12pt,twoside,a4paper]{article}
\usepackage{url}

\usepackage[utf8]{inputenc}
\usepackage[T1]{fontenc}
\usepackage{amsmath}
\usepackage{amsfonts}
\usepackage{amssymb}
\usepackage[numbers]{natbib}

\usepackage[bookmarksopen=false,breaklinks=true,%
      backref=page,pagebackref=true,plainpages=false,%
      hyperindex=true,pdfstartview=FitH,%
      pdfpagelabels=true,
      colorlinks=true,linkcolor=blue,%
      citecolor=red,urlcolor=red,
      colorlinks=true%
      ]%
   {hyperref}

\pagestyle{headings}

\marginparwidth 0pt
\oddsidemargin -.5cm
\evensidemargin -.5cm
\marginparsep 0pt
\topmargin -1.5cm
\textwidth 17cm
\textheight 25cm
\sloppy

\newtheorem{theorem}{Theorem}[section]
\newtheorem{lemma}[theorem]{Lemma}
\newtheorem{proposition}[theorem]{Proposition}

\newtheorem{remark}[theorem]{Remark}

\newtheorem{definition}[theorem]{Definition}

\newtheorem{example}[theorem]{Example}
\newtheorem{comput}[theorem]{Computational problem}


\newcommand{\junk}[1]{}
\def\={\hbox{\ \bf =}\ }

\newenvironment{proof}[1]{
\trivlist \item[\hskip \labelsep{\bf #1.}]\hskip 0pt\\}{\hfill 
\mbox{$\Box$}
\endtrivlist}
\makeatletter

\def\bar#1{\overline{#1}} 

\def \S{{\bf S}}
\def \V{{\bf V}}
\def \K{{\bf K}}
\def \L{{\bf L}}

\def \P{{\overline P}}
\def \x{{\overline x}}

\def\C{\mathfrak{C}}
\def\D{\mathfrak{D}}

\def \Inp{{\bf Input: }}
\def \Output{{\bf Output: }}

\def \Kac{{\K^{\rm ac}}}
\def \Kh{{\K^{\rm h}}}
\def \Vac{{\V^{\rm ac}}}
\def \GK{\Gamma_\K}

\def \GKac{\Gamma_{\Kac}}
\def \GKd{\Gamma_\K^{dh}}
\def \ResK{\overline{\K}}

\def \ResKac{\overline{\Kac}}

\def \U{\mathcal{U}}
\def \M{\mathcal{M}}
\def \MV{\M_{\V}}
\def \UV{\U_{\V}}
\def \UuV{\U^1_{\V}}
\def \MVac{\M_{\Vac}}

\def \GCD{\gcd}

\def \Z{\mathbb{Z}}
\def \N{\mathbb{N}}

\def \pgn{Newton poly\-gon }
\def \NPA{Newton Poly\-gon Algorithm }
\def \NPAz{Newton Poly\-gon Algorithm}

\def\impl{\mathrel{\Longrightarrow}}
\def\equi{\mathrel{\Longleftrightarrow}}

\def \noi {\noindent}
\def \ss {\smallskip}
\def \sni {\ss\noi}
\def \ms {\medskip}

\def\gen#1{\left\langle{#1}\right\rangle}

\title{Dynamic Computations Inside 
the Algebraic Closure of a Valued Field}
\author{Franz-Viktor Kuhlmann and 
Henri Lombardi and
and Herv\'e Perdry}

\date{2003}
\begin{document}

\maketitle

This paper appeared as

{{Kuhlmann, Franz-Viktor, Lombardi, Henri and Perdry,
              Herv{\'e}.
 {Dynamic computations inside the algebraic closure of a valued
              field},
p.\ 133-156 in the book {\it Valuation theory and its applications, {V}ol.\ {II}
              ({S}askatoon, {SK}, 1999)},
{Fields Inst.\ Commun.\ Vol.\ {33},}
 {Amer.\ Math.\ Soc., Providence, RI},
{(2003)},
}

\begin{abstract}
We explain how to compute in the algebraic closure of a
valued field. These computations heavily rely on the \NPAz. They are
made in the same spirit as the dynamic algebraic closure of a
field. They give a concrete content to the theorem saying that a
valued field does have an algebraically closed valued extension. The
algorithms created for that purpose can be used to perform an
effective quantifier elimination for algebraically closed valued
fields, which relies on a very natural geometric idea.
\end{abstract}

\medskip\noindent Key words: Valued fields, Quantifier elimination,
Constructive mathematics, Dynamical algebra

\medskip\noindent MSC 2000: {Primary: 12J10, 12J20, 12Y05, 03C10; Secondary: 13P05, 68W30}

\bigskip  \noindent 
Franz-Viktor Kuhlmann. Mathematical Sciences Group, University of Saskatchewan, Saskatoon, SK, S7N 5E6, Canada. {\tt fvk@math.usask.ca}

\smallskip \noindent 
Henri Lombardi. Laboratoire de Math\'ematiques, UMR CNRS 6623.
Universit\'e de Franche-Comt\'e, France.
{\tt henri.lombardi@univ-fcomte.fr}

\smallskip \noindent 
Herv\'e Perdry. Laboratoire de Math\'ematiques, UMR CNRS 6623.
Universit\'e de Franche-Comt\'e, France.
{\tt perdry@math.univ-fcomte.fr}

\newpage
\section*{Introduction} \label{sec Introduction}

\noi
We consider a valued field $\K$ with $\V$ its valuation ring and $\S$ a
subring of $\V$ such that $\K$ is the quotient field of $\S$. We assume
that $\S$ is an explicit ring and that divisibility inside $\V$ can be
tested, for any two elements of $\S$. By explicit ring we mean
a ring where algebraic operations and equality test are explicit. These
are our minimal assumptions of computability. If we want more
assumptions in certain cases we shall explicitly state them.

We let $\Kac$ denote the algebraic closure of $\K$ with $\Vac$ a
valuation ring that extends $\V$. Our general purpose is the discussion
of computational problems in $(\Kac,\Vac)$ under our computability
assumptions on $(\K,\V)$.

Each computational problem we shall consider has as input {\it a finite
family $(c_i)_{i=1,\ldots,n}$ of  parameters} in the ring $\S$. We call
them the {\it coefficients of our computational problem}.
Algorithms with the above minimal computability assumptions work
uniformly.
This means that some computations are made that give polynomials of
$\Z[C_1,\ldots,C_n]$,
and that all our tests are of the two following types:
$${\rm Is}\ P(c_1,\ldots,c_n) = 0\ ?   \qquad {\rm  Does}\
Q(c_1,\ldots,c_n)\ {\rm  divide}\ P(c_1,\ldots,c_n)\ {\rm in}\ \V \ ?
  $$
We are not interested in the way the answers to these tests are made.
We may imagine these answers given either by some oracles or by some
algorithms.

\markboth{Introduction}{Introduction}

\medskip
We shall denote the unit group by
$\,\UV \,$ or $\V^{\times}$, $\,\MV ={\V} \setminus \UV\, $  will be the
maximal ideal and $\,\UuV=1+\MV\,$ is the group of units whose residue
is equal to 1. We denote the value group $\K^{{\times}}/\UV$ by $\GK$.
We consider $\GKac$ as the divisible hull $\GKd$ of $\GK$, and the
valuation $v_\Kac$ as an extension of $v_\K$. We shall denote the
residue field $\V/\MV$ of $(\K ,{\V} )$ by $\,\ResK\,$. By convention,
$v(0)=\infty$ (this is not an element of $\GK$).

We say that the value of some element $x$ belonging to $\Kac$ is well
determined if we know an integer $m$ and two elements $F$ and $G$ of
$\Z[C_1,\ldots,C_n]$ such that, setting $f=F(c_1,\ldots,c_n)$, with
$f\neq 0$, and $g=G(c_1,\ldots,c_n)$, there exists a unit $u$ in
$\Vac$ such that:

$$f x^m = u g
$$
(a particular case is given by infinite value, i.e., when $x =0$.)

We call $v(x)$ the value of $x$ and we read the previous formula as:
$$ m\, v(x) = v(g)-v(f)
\>.$$
We shall use the notation $x\preceq y$ for $v(x)\leq v(y)$.
\begin{example} \label{example val group}
{\rm Let us for example explain the computations that are necessary to
compare $3v(x_1)+2v(x_2)$ to $7v(x_3)$ when the values are given by
$$f_1 x_1^{m_1} = u_1 g_1,\; f_2 x_2^{m_2} = u_2 g_2,\; f_3 x_3^{m_3} =
u_3 g_3,\quad (g_1,g_2,g_3\neq0)\>.
$$
We consider the LCM $m=m_1n_1=m_2n_2=m_3n_3$ of $m_1,m_2,m_3$.
We have that
$$f_1^{n_1} x_1^{m} = u_1^{n_1} g_1^{n_1},\; f_2^{n_2} x_2^{m} = u_2^{n_2}
g_2^{n_2},\; f_3^{n_3} x_3^{m} = u_3^{n_3} g_3^{n_3}\>.
$$
So  $3v(x_1)+2v(x_2)\leq 7v(x_3)$ iff
$g_1^{3n_1}g_2^{2n_2}f_3^{7n_3}\preceq f_1^{3n_1}f_2^{2n_2}g_3^{7n_3}$.
}
\end{example}

The reader can easily verify that computations we shall run in the value
group are always meaningful under our computability asumptions on the ring
$\S$.

In the same way, elements of the residue field will be in general
defined from elements of $\V$. So computations inside the residue
field are given by computations inside $\S$.

\medskip
The constructive meaning of the existence of an algebraic closure
$(\Kac,\Vac)$ of $(\K,\V)$ is that computations inside $(\Kac,\Vac)$
never produce contradictions. The constructive proof of this
constructive meaning
can be obtained  by considering classical proofs
(of the existence of an algebraic closure)
from the viewpoint of dynamical theories (see \cite{CLR}).

\medskip
The present paper can be read from a classical point of view as well as
from a constructive one. Our results give a uniform way for computing
inside $(\Kac,\Vac)$   when we know how to compute inside $(\K,\V)$.

\medskip In the first section we give some basic material for
computation inside algebraically closed valued fields. The most
important is the Newton Polygon Algorithm.

In  section  \ref{sec alg clos}, we explain how
the Newton Polygon Algorithm can be
used in order to make explicit computations inside the
algebraic closure of a valued field, even in the case where there is no
factorization algorithm for one variable polynomials. It is
sufficient to take the point of view of dynamic evaluations as in
\cite{DD}.

To conclude the paper, we give in section \ref{QE} a new quantifier
elimination algorithm for the theory of algebraically closed valued
fields (with fixed characteristic and residue field
characteristic). The geometric idea for this algorithm is simple.  It
can be easily implemented after the work done in section \ref{sec alg
clos}.

\section{Basic material} \label{sec Basic material}

\subsection{Multisets} \label{subsec multisets}

A {\it multiset } is a set with (nonnegative) multiplicities, or
equivalently, a list defined up to permutation. In particular, the roots
of a polynomial $P(X)$
form a multiset in the algebraic closure of the base field. We shall use
the notation $[x_1,\ldots,x_d]$ for the multiset corresponding to the list
$(x_1,\ldots,x_d)$. The {\it cardinality} of a multiset is the length of a
corresponding list, i.e., the sum of multiplicities occurring in the
multiset.

We shall use the natural (associative commutative) additive notation for
``disjoint unions" of multisets, e.g.,
$$[b,a,c,b,b,a,b,d,a,c,b]=3[a,b]+[b,b,d]+2[c]=3[a]
+5[b]+2[c]+[d]
\>.$$

We call {\it  a pairing between two multisets} what remains of a bijection
between  two corresponding lists when one forgets the ordering of the
lists. E.g., if we consider the two lists
$$(a,a,a,a',a',a',a'')=(a_i)_{i=1,\ldots,7}\quad\mbox{ and }\quad
(b,b,b',b',b'',b'',b'')=(b_i)_{i=1,\ldots,7}$$
corresponding to the multisets
$$3[a]+3[a']+[a'']\quad\mbox{ and }\quad 2[b]+2[b']+3[b'']\,,$$
and the bijection
$$a_1\mapsto b_3,\,a_2\mapsto b_4,\,a_3\mapsto b_1,\,
a_4\mapsto b_6,\,a_5\mapsto b_5,\,a_6\mapsto b_7,\,a_7\mapsto b_2\>,$$
then what remains can be described as
$$2[a\mapsto b']+[a\mapsto b]+3[a'\mapsto b'']+
[a''\mapsto b]\>,$$
or equivalently as
$$2[(a, b')]+[(a, b)]+3[(a', b'')]+[(a'', b)]\>.$$
This is a multiset of pairs that gives by the canonical projections
the initial multisets $3[a]+3[a']+[a'']$ and $2[b]+2[b']+3[b'']$.

This notion can be extended to $r$ multisets $M_1,\dots,M_r$ with same
cardinality $k$: a pairing between the $M_i$'s is a multiset of
$r$-tuples that gives by the canonical projections the initial
multisets $M_1,\dots,M_r$.

The notion of multisets is a natural one when dealing with roots of a
polynomial in an abstract setting. Multiplicity is relevant, but in
general there is no canonical ordering of the roots.

Dynamic evaluation in \cite{DD,DD2} can be understood as a way of
computing with root multisets.
\subsection{The Newton Polygon} \label{subsec NP}

Here we recall the well known \NPAz.

\smallskip The \pgn of a polynomial $P(X)=\sum_{i=0,\ldots,d} p_iX^i\in
\K[X]$ (where $p_d\not= 0$) is obtained from the list of pairs in
%
%
$\N\times (\GK\cup \left\{\infty\right\})$
$$
((0,v(p_0)),(1,v(p_1)),\ldots,(d,v(p_d)))\>.
$$
The \pgn is ``the bottom convex hull" of this list.
It can be formally defined as the extracted list
$\,((0,v(p_0)),\ldots,(d,v(p_d)))\,$ verifying:
two pairs $(i,v(p_i))$ and $(j,v(p_j))$ are two consecutive vertices
of the \pgn iff:

\centerline {if  $0\leq k< i\ $ then
$\ (v(p_j)-v(p_i))/(j-i)\ >\ (v(p_i)-v(p_k)/(i-k))$}

\centerline {if $ i<k< j\ $ then
$\ (v(p_k)-v(p_i))/(k-i)\ \ge\ (v(p_j)-v(p_i))/(j-i)$}

\centerline {if $j<k\le d\ $ then
$\ (v(p_k)-v(p_j))/(k-j)\ >\ (v(p_j)-v(p_i))/(j-i)$}

\noi
Let $P(X)=p_d\prod_{i=1}^d(X-x_i)$ in $\Kac[X]$.
It is easily shown that if  $(i,v(p_i))$ and $(j,v(p_j))$ are two
consecutive vertices in the \pgn of the polynomial $P$,
then the zeros of $P$
in $\Kac$ whose value in   $\GKd$ equals
$\ (v(p_i)-v(p_j))/(j-i)\ $ form a multiset with cardinality $\ j-i\ $.

\begin{proof}{Proof}
Order the $x_i$'s in
non-decreasing order of the values $v(x_i)$. We give the proof for an
example. Assume for instance that
$$ \nu_1=v(x_1)=v(x_2)<\nu_3=v(x_3)=v(x_4)=v(x_5)<\nu_6=v(x_6)\cdots
$$
Let us express $p_{d-j}/p_d$ as a symmetric function of the roots.
We see immediately that
$$\begin{array}{rcl}
v(p_{d-1})&\geq&v(p_d)+ \nu_1  \\
v(p_{d-2})&=   &v(p_d)+ 2\nu_1  \\
v(p_{d-3})&\geq&v(p_d)+ 2\nu_1+\nu_3>v(p_d)+ 3\nu_1 \\
v(p_{d-4})&\geq&v(p_d)+ 2\nu_1+2\nu_3  \\
v(p_{d-5})&=   &v(p_d)+ 2\nu_1+3\nu_3  \\
v(p_{d-6})&\geq&v(p_d)+ 2\nu_1+3\nu_3+\nu_6>v(p_d)+  2\nu_1+4\nu_3
\end{array}$$
So the two last edges of the \pgn are $((d-2,v(p_{d-2})),(d,v(p_d)))$
with slope $-2\nu_1$ and  $((d-5,v(p_{d-5})),(d-2,v(p_{d-2})))$
with slope $-3\nu_3$, giving the wanted result.
\end{proof}
%
Now we can give an answer to the following problem.

\begin{comput} \label{comput Val of roots}
{\em  (Multiset of values of roots of polynomials)}  \\
\Inp A polynomial $P\in \K [X]$ over a valued field $(\K ,{\V})$.
\\
\Output The multiset $[v(x_1),\ldots,v(x_n)]$ where $[x_1,\ldots,x_n]$ is
the multiset of roots of $P$ in $\Kac$.
\end{comput}
%
This problem is solved by the following algorithm, which is widely used
in the sequel.

\begin{proof}{\NPAz}
The number $n_\infty$ of roots equal to 0 (i.e., with infinite value)
is read off from $P$. Let $P_0:=P/X^{n_\infty}$. Compute the Newton
polygon of $P_0$, compute the slopes of the edges and output the answer.
\end{proof}

\subsection{Generalized Tschirnhaus transformation}
\label{subsec Tschirnhausen}
\def \A {{\bf A}}
\def \B {{\bf B}}

We recall a well known way of computing in algebraic extensions,
which we will use freely in our paper. We call this method the {\em
generalized Tschirnhaus transformation}.

Let $\K$ be a field, $(P_j)_{j=1,\ldots,r}$
be a family of monic
polynomials in $\K [X]$, and
$$P_j(X)=(X-\xi_{j,1})\cdot\ldots\cdot (X-\xi_{j,d_j})
$$
their factorizations in $\Kac[X]$. Take $Q(X_1,\ldots,X_r)\in
\K[X_1,\ldots,X_r]$, and let $d=d_1\cdots d_r$. We claim that the
polynomial
$$
T_Q(Z)=(Z-Q(\xi_{1,1},\ldots,\xi_{r,1}))\cdot\ldots\cdot
(Z-Q(\xi_{1,d_1},\ldots,\xi_{r,d_r}))
$$
of degree $d$ is the characteristic polynomial of $A_Q\,$, where $A_Q$
is the matrix of the multiplication by $Q(x_1,\ldots,x_r)$ inside the
$d$-dimensional $\K$-algebra
$$\K[x]:=\K [X_1,\ldots,X_r]/\gen{P_1(X_1),\ldots,P_r(X_r)}\;.$$

We give a proof of this well known fact, for which we found no
reference. We prove a slightly more general result, which deals with
roots of so-called {\em triangular systems}. Moreover, the computation
works in arbitrary commutative rings.

\begin{definition} \label{TRS}
    Let $\A\subset \B$ be commutative rings.
\begin{enumerate}
\item
   Take a system of polynomials
$$\begin{array}{c}
\P = (P_1,\ldots ,P_r)   \quad {\it  where} \;\;P_1(X_1)\in \A [X_1],\\
P_2(X_1,X_2)\in \A
[X_1,X_2],\,\ldots,P_r(X_1,\ldots,X_r)\in \A [X_1,\ldots,X_r]\>.
\end{array}$$
This system is called a {\em  triangular system}
if each $P_i$ is monic w.r.t.\ $X_i$.
\item
The {\em quotient algebra} is
$\A[X_1,\ldots,X_r]/\gen{P_1,\ldots ,P_r}=\A[x_1,\ldots,x_r]$ where
$x_i$ is the class of $X_i$. We denote it by  $\A_{\P}$.
Let $d_{i}=\deg_{X_{i}}(P_i)$.
Then  $\A_{\P}$ is a free $\A$-module of rank $d_1d_2\cdots d_r$ with
``monomial basis" $(x_1^{\mu_1}\cdots x_r^{\mu_r})_{\mu_i<d_i}$.
Note that we may assume w.l.o.g.\ that $\deg_{X_{j}}(P_k)<d_j$ for
$k>j$.
\item
A vector $\alpha=(\alpha_1,\ldots,\alpha_k)\in \B^{r}$ is called a
{\em root vector of} $\P$ (or {\em a solution of} $\P$) if
$$P_1(\alpha_1)=P_2(\alpha_1,\alpha_2)=\ldots=P_k(\alpha_1,\ldots,
\alpha_r)=0 \>.$$
\item
Assume for simplicity that $r=3$. We say that the system $\P$ fully
splits in $\B$ if $\B$ contains elements
$\xi_i$ $(i\leq d_1)$, $\xi_{i,j}$ $(i\leq d_1,\,j\leq d_2)$, and
$\xi_{i,j,k}$ $(i\leq d_1,\,j\leq d_2,\,k\leq d_3)$ such that
\begin{equation} \label{eqSplit}
\left.
\begin{array}{rclcl}
P_1(X)      & = &  \prod_{i\leq d_1} (X-\xi_i)   \\
P_2(\xi_i,Y)&= &   \prod_{j\leq d_2} (Y-\xi_{i,j})&& (i\leq d_1)   \\
P_3(\xi_i,\xi_{i,j},Z)
    &= &\prod_{k\leq d_3} (Z-\xi_{i,j,k})&& (i\leq d_1,\,j\leq d_2)
\end{array}
\right\}
\end{equation}
\item
When $\A=\K$ and $\B=\Kac$, two systems with the same variables are called
{\em  coprime systems} if
they have no common root vector.
\end{enumerate}
\end{definition}

In order to simplify notations, we give our result for the case $r=3$.
\begin{proposition}
\label{propTschir}
Let $\A\subset \B$ be commutative rings and
$\P = (P_1,P_2 ,P_3)$ a triangular system over $\A$ which fully splits
in $\B$ with equations $(\ref{eqSplit})$. Let $Q(x_1,x_2,x_3)\in\A_\P$,
$\mu_Q$ be the $\A$-linear endomorphism of $A_\P$ representing
multiplication by $Q$, and $C_Q(Z)$ the characteristic polynomial of
$\mu_Q$. Then we have
\begin{equation} \label{eqTschir}
C_Q(Z)=\prod_{i\leq d_1,\,j\leq d_2,\,k\leq d_3}
(Z-Q(\xi_{i},\xi_{i,j},\xi_{i,j,k}))
\end{equation}
\end{proposition}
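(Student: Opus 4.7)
My plan is to base-change to $\B$ and exploit the evaluation homomorphisms provided by the full splitting; this reduces the problem to the trivial observation that a diagonal matrix has the expected characteristic polynomial. Since the characteristic polynomial of an endomorphism of a free module is invariant under extension of scalars, it suffices to compute $\det_\B(ZI - \mu_Q)$ on $\B_\P := \B \otimes_\A \A_\P$, which is a free $\B$-module of rank $d := d_1 d_2 d_3$ with the same monomial basis as $\A_\P$. The splitting equations~(\ref{eqSplit}) guarantee that for each triple $(i,j,k)$ the assignment $x_1 \mapsto \xi_i$, $x_2 \mapsto \xi_{i,j}$, $x_3 \mapsto \xi_{i,j,k}$ extends to a well-defined $\B$-algebra map $\varepsilon_{i,j,k}: \B_\P \to \B$. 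Assembling these yields a $\B$-algebra map $\varepsilon: \B_\P \to \B^d$ which intertwines multiplication: $\varepsilon \circ \mu_Q = \mu_{\varepsilon(Q)} \circ \varepsilon$, where $\mu_{\varepsilon(Q)}$ is diagonal multiplication on $\B^d$ by the tuple $(Q(\xi_i, \xi_{i,j}, \xi_{i,j,k}))_{(i,j,k)}$.

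Letting $E$ denote the $d \times d$ matrix of $\varepsilon$ in the monomial basis of $\B_\P$ and the standard basis of $\B^d$, the matrix identity $E(ZI - \mu_Q) = (ZI - \mathrm{diag}(\varepsilon(Q)))\, E$ gives, on taking determinants,
$$\det(E) \cdot C_Q(Z) \;=\; \det(E) \cdot \prod_{i,j,k} \bigl( Z - Q(\xi_i, \xi_{i,j}, \xi_{i,j,k}) \bigr) \quad \text{in } \B[Z].$$
A suitable reordering of rows and columns displays $\det(E)$ as a nested product of Vandermonde-type determinants in the differences $\xi_i - \xi_{i'}$, $\xi_{i,j} - \xi_{i,j'}$ and $\xi_{i,j,k} - \xi_{i,j,k'}$, so it is generically a non-zero-divisor; whenever this is so we cancel and conclude.

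The main obstacle is the general case, where $\det(E)$ may be zero or a zero-divisor in $\B$. I would handle this by a universality argument: both sides of~(\ref{eqTschir}) are polynomial expressions in the coefficients of $Q$, the coefficients of the $P_l$'s and the $\xi$'s subject to~(\ref{eqSplit}), so it suffices to establish the identity in a universal $\Z$-algebra parameterizing triangular systems equipped with a full splitting, and then specialize to $\B$. In that universal ring the ``differences'' among the $\xi$'s are non-zero-divisors, so the cancellation step is legitimate there. The delicate point is the precise construction of this universal ring: since the coefficients of $P_2$ and $P_3$ are not polynomials in the $\xi$'s in an obvious way (they arise via a Lagrange-interpolation-like expression modulo the splitting ideal), one must work with the coordinate ring of the appropriate scheme and verify that the specialization morphism transports the identity correctly. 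A conceptually cleaner alternative avoiding this subtlety is induction on $r$ via iterated resultants: one identifies $C_Q(Z)$ with the iterated resultant $R(Z) := \mathrm{Res}_{X_1}(P_1,\mathrm{Res}_{X_2}(P_2,\mathrm{Res}_{X_3}(P_3, Z - Q)))$ using the classical $r=1$ formula $\det(ZI - Q(M_{P})) = \mathrm{Res}_X(P(X), Z - Q(X))$ together with a transitivity statement for characteristic polynomials in the tower $\A \subset \A[X_1]/\langle P_1\rangle \subset \A_\P$, and then unfolds $R(Z)$ via three applications of the product formula $\mathrm{Res}_X(\prod_i(X-\xi_i), f(X)) = \prod_i f(\xi_i)$.
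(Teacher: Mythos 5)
You take a genuinely different route from the paper. The paper's proof applies Cayley--Hamilton to deduce $C_Q(Q)=0$ in $\A_\P$, and hence $C_Q(Q(\xi_i,\xi_{i,j},\xi_{i,j,k}))=0$ for every root vector; in the ``good case'' where $\B$ is a domain and the $d$ values $Q(\xi_i,\xi_{i,j},\xi_{i,j,k})$ are pairwise distinct, it concludes by observing that both sides of~(\ref{eqTschir}) are monic of degree $d$ with the same $d$ distinct roots, and then reduces the general case to the generic one by specialization. Your argument instead exhibits the explicit intertwining $\varepsilon\circ\mu_Q=\mathrm{diag}(\varepsilon(Q))\circ\varepsilon$ furnished by the evaluation morphism $\varepsilon:\B_\P\to\B^d$, whence $\det(E)\cdot C_Q(Z)=\det(E)\cdot\prod(Z-Q(\xi_i,\xi_{i,j},\xi_{i,j,k}))$ in $\B[Z]$, and cancels when the nested Vandermonde $\det(E)$ is a non-zero-divisor. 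This is more explicit: it isolates precisely which determinant controls the argument, it needs only the $\xi$'s (not the values $Q(\xi_i,\xi_{i,j},\xi_{i,j,k})$) to be separated, and it does not require $\B$ to be a domain. Both proofs reduce the general case to the generic one by specialization, and you correctly flag the subtlety there --- the coefficients of $P_2$ and $P_3$ are determined from the $\xi$'s only after Lagrange interpolation, so the universal ring must be set up with some care; the paper glosses over exactly the same point, so your proof is neither more nor less rigorous on this score. Your iterated-resultant alternative, if carried out in full, would be attractive precisely because it could sidestep the generic-case reduction entirely, but as stated it rests on a transitivity property of characteristic polynomials in towers of free algebras that itself needs proof.
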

\begin{proof}{Proof}
Note that we could have chosen $Q\in \A[X_1,X_2 ,X_3]$. But if
$(\alpha,\,\beta,\,\gamma)$ is a root vector of $\P$ in an extension of
$\A$,  it is clear that $Q(\alpha,\beta,\gamma)$ depends only of the
class of $Q$ in $\A_\P$, so equation (\ref{eqTschir}) is meaningful.

By Cayley-Hamilton $C_{Q}(\mu_Q)=0$ and since
$\mu_Q(1)=Q$, $C_{Q}(Q)=0$. This implies that
$C_{Q}(Q(\alpha,\beta,\gamma))=0$ each time we have a root vector
$(\alpha,\,\beta,\,\gamma)$ of $\P$ in an extension of $\A$ since
$\A[\alpha,\beta,\gamma]$ is a homomorphic image of $\A_\P$.

So the proposition is proved in the ``good case" where $\B$ is a
domain and all the root vectors in (\ref{eqSplit}) give distinct values
for $Q(\xi_{i},\xi_{i,j},\xi_{i,j,k})$: the RHS and LHS in
(\ref{eqTschir}) are monic univariate polynomials with the same roots,
all being distinct.

Now we give the proof for the ``generic case" where the $\xi_{i},
\xi_{i,j}, \xi_{i,j,k}$ and the coefficients $q_{i,j,k}$ of $Q$ are
indeterminates. This means that $\B$ can be replaced by a ring generated
over $\Z$ by these indeterminates, and $\A$ can be replaced by the
subring of $\B$ generated by the coefficients of $Q$ and by the
coefficients of $P_1,P_2,P_3$ which are defined by equations
(\ref{eqSplit}). In this generic case, $\B$ is an integral domain and
all the $Q(\xi_{i},\xi_{i,j},\xi_{i,j,k})$ are distinct. So the generic
case is a good case and we are done.

Finally, note that all non-generic cases are homomorphic
images of the generic case.
\end{proof}

We give another slight generalization, which can be proved in
a similar way.
Let $Q,R\in \A[X_1,\ldots,X_r]$ with $R(\xi)$ invertible in $\B$
for all the root vectors in (\ref{eqSplit}). Let $F=Q/R$.
Then $A_R$ is an invertible matrix (over $\B$) and the polynomial
$$
T_F(Z)=\prod_{i\leq d_1,\,j\leq d_2,\,k\leq d_3}
(Z-F(\xi_{i},\xi_{i,j},\xi_{i,j,k}))
$$
is the characteristic polynomial of $A_Q(A_R)^{-1}$.

\section{Dynamic computations in the algebraic closure}
\label{sec alg clos}
\noi Dynamic computations in the algebraic closure of a valued field are an
extension of dynamic computations in the algebraic closure of a field as
explained in \cite{DD,DD2}. First let us recall these ones.
\subsection{Dynamic algebraic closure} \label{subsec DAC}

The following algorithms tell us how to compute dynamically in the
algebraic closure of $\K $ when we do not want to (or we cannot) use
factorization algorithms in $\K [X]$.

First we examine the problem of adding one root of a monic polynomial
without factorization algorithm. If we are able to compute in the
field so
created, then we are able to compute recursively in any finite extension
given by adding one after the other roots of several polynomials.
In fact, since there is a priori an ambiguity about what root we have
introduced (distinct roots give in general non-isomorphic fields), we
have to compute all possible cases.

\begin{comput} \label{comput D5}
{\em (computational problem \`a la D5) } \

\noindent
\Inp Let $P$  (of degree $\geq 2$) and $Q$ be polynomials in $\K [X]$.\\
\Output Give correct answers to the following questions:
\begin{itemize}
\item [$(1)$] Is $Q$ zero at each root of $P$ in $\Kac$?
\item [$(2)$] Is  $Q$ nonzero at each root of $P$ in $\Kac$?
\item [$(3)$] If the two answers are ``No", compute two factors $P_1$ and
$P_2$ of $P$ and two polynomials $U_1$, $U_2$ such that: \\
--- $Q$ is zero at each root of $P_1$ in $\Kac$, \\
--- $Q$ is nonzero at each root of $P_2$ in $\Kac$, \\
--- $P_1$ and $P_2$ are coprime, $P_1U_1+P_2U_2=1$,\\
--- each root of $P$ in $\Kac$ is a root of $P_1P_2$.
\end{itemize}
\end{comput}
We give two natural solutions of the previous problem.
\begin{proof}{Algorithm SquarefreeD5}
{\em  (solving computational problem \ref{comput D5} when $P$ is a
squa\-re\-free polynomial)}\\
Assume that $P$ is squarefree.

\noi  Compute the monic GCD $P_1$ of $P$ and $Q$.

\noi If $P_1=1$ then answer ``Yes'' to the second question;

\noi $\hspace{5.5mm}$ else if ${\rm lc}(P) P_1=P$ then answer ``Yes'' to
the first question;

\noi $\hspace{11mm}$ else return $P_1$, $P_2:= P/P_1$ and polynomials
$U_1$, $U_2$ s.t.\ $P_1U_1+P_2U_2=1$.
\end{proof}

\begin{proof}{Algorithm BasicD5}
{\em (solving computational problem \ref{comput D5})} \\
Compute the monic GCD $P_1$ of $P$ and $Q$.

\noi If $P_1=1$ then answer ``Yes'' to the second question;

\noi $\hspace{5.5mm}$ else compute the monic polynomial $P_2$ such that:

\noi $\hspace{5.5mm}$ $P_2$ divides  $P$, ${\rm GCD}(P_1,P_2)=1$ and $P$
divides  $P_1^mP_2$ (for some $m$);

\noi $\hspace{11mm}$ if $P_2=1$ then answer ``Yes'' to the first
question, and replace $P$ by $P_1$;

\noi $\hspace{16.5mm}$ else return $P_1$, $P_2$ and polynomials $U_1$, $U_2$
s.t.\ $P_1U_1+P_2U_2=1$.
\end{proof}
The replacement of $P$ by $P_1$ is not used in the algorithm itself, but
is meant for use by subsequent algorithms because if $P_2=1$ then $P_1$
has the same roots as $P$ but possibly smaller degree.

\begin{remark}
\label{remBasicD5}
{\rm
Observe
that $P_2=P/\GCD(P_1^k,P)=P/\GCD(Q^k,P)$ where $k=1+\deg(P)-\deg(P_1)$.
We can also get $P_2$ by iteration of the process: start with $R=P$;
replace $R$ by $R/\GCD(R,Q)$ (here $\GCD(R,Q)$ means the monic GCD of
$R$ and $Q$), until the GCD is $1$.

If $P$ is monic and the ring $\S$ is normal then
$P_1$ and $P_2$ are in $\S[X]$, but it is not always easy to make this
result explicit. Nevertheless we can always compute $P_1$ and $P_2$ using
coefficients in the quotient field of $\S$: the GCD computation may use
pseudo divisions instead of divisions. The use of subresultant polynomials
may improve the efficacity of the algorithm.
}
\end{remark}

We can understand the previous algorithms as breaking the set of roots
of a polynomial in distinct subsets anytime that some objective
distinction may be done between the roots. Their stupendous simplicity
is certainly the main reason explaining their non-universal use in the
literature about algebraic extensions of fields.

\begin{remark}
\label{remRootmultiset}
{\rm If we see the roots of $P$ as a multiset, and if we want to
keep the information concerning multiplicities, the output
\begin{itemize}
\item $(P_1,P_2)$ with $P_1$, $P_2$ coprime and each root of $P$
in $\Kac$ is a root of $P_1P_2$.
\end{itemize}

\noi is not the good one. We need in this case one of the two
following outputs:
\begin{itemize}
\item $(P_1,P_2)$ with $P_1$, $P_2$ coprime and  $P_1P_2=P$.
\end{itemize}

\noi  or in a more economic way for future computations:
\begin{itemize}
\item $(P_1,P_2)$ with $P_1$, $P_2$ coprime,  $P_1P_2=P$ and a
decomposition of each $P_i$ as a product
of powers of coprime polynomials.
\end{itemize}

\noi
The computational problem corresponding to the first output can be
solved by the following slight variant of {\bf  BasicD5}.

\begin{proof}{Algorithm MultisetD5} \emph{(solving a multiset variant of
computational problem~\ref{comput D5}).}

\noi \Inp Let $P$  (of degree $\geq 2$) and $Q$ be polynomials in $\K [X]$.\\
\Output $(P_1,P_2)$ with $P_1$, $P_2$ coprime,  $P_1P_2=P$, $Q$ is zero at
each root of $P_1$ in $\Kac$, $Q$ is nonzero at each root of $P_2$ in
$\Kac$.

\noi  Compute the monic GCD $R_1$ of $P$ and $Q$.

\noi If $R_1=1$ then return $P_1=1$, $P_2=P$

\noi $\hspace{5.5mm}$ else compute the monic polynomial $P_2$ such that:

\noi $\hspace{11mm}$ $P_2$ divides  $P$, ${\rm GCD}(R_1,P_2)=1$ and $P$
divides  $R_1^mP_2$ (for some $m$).

\noi $\hspace{5.5mm}$ return $P_2$, $P_1=P/P_2$ and polynomials $U_1$, $U_2$
such that $P_1U_1+P_2U_2=1$.
\end{proof}

}
\end{remark}

We now explain the recursive use of algorithms {\bf  SquarefreeD5} and
{\bf BasicD5}. Note that root vectors of a triangular system $\P$ as in
definition~\ref{TRS} form a multiset of cardinality
$d=\prod_i\deg_{X_i}(P_i)$.

\begin{comput} \label{comput recursive D5}
{\em  (computing in extensions generated by several successive algebraic
elements)} \\
\Inp
\begin{itemize}
\item  A triangular system of polynomials
$\P = (P_1,\ldots ,P_n)$:
$$P_1(X_1)\in \K [X_1],\,P_2(X_1,X_2)\in \K
[X_1,X_2],\,\ldots,P_k(X_1,\ldots,X_k)\in \K [X_1,\ldots,X_k]\>.
$$
\item A finite list of polynomials $Q_1,\ldots,Q_r$ in $\K
[X_1,\ldots,X_k]$.
\end{itemize}
\Output
\begin{itemize}
\item  A list of coprime triangular systems
$\overline{S^{(1)}},\ldots,\overline{S^{(\ell)}}$
whose root vectors form a partition of the set of all solutions of the
initial triangular system $\P$, such that for each $j$, the $r$-tuple of
signs for the tuple $(Q_1(\x),\ldots,Q_r(\x))$ (the sign of $y$ is
either $0$ if $y=0$ or $1$ if $y\neq 0$), is the same for every root
vector $\x=(x_1,\ldots,x_k)$ of $\overline{S^{(j)}}\,$.
\item  For each  triangular system $\overline{S^{(j)}}$, this
fixed $r$-tuple of signs.
\end{itemize}
\end{comput}
In the general case, we can solve the previous problem in the following
way.
\begin{proof}{Algorithm TriangularBasicD5}
{\em (solving computational problem \ref{comput recursive D5})}

\noindent Use {\bf BasicD5} recursively. More precisely, consider
that $Q$ and $P_k$ are polynomials in the variable $X_k$ with parameters
$(x_1,\ldots,x_{k-1})$. When making the computations of {\bf BasicD5}
we have to solve some tests

\centerline{ ``\ Is $R(x_1,\ldots,x_{k-1})$ equal to zero or not\ ?\ " }

\noindent for some polynomials $R$ given by the computation. So we have
to solve the same kind of problem with one variable less. Hence, a
recursive computation will produce the answer.
\end{proof}

In the case of a perfect field, we can use {\bf SquarefreeD5}
recursively. To see why this works, we have to recall how to compute the
squarefree part of a polynomial in one variable in this case.
\begin{proof}{Algorithm SquarefreePart}
{\em (compute the squarefree part of a polynomial in one variable in the
case of a perfect field)}

\noi We assume that $\K$ is a perfect field. In the characteristic $p$
case we assume that getting $p$-th roots is explicit inside $\S$.

\sni \Inp A polynomial $P \in\S[X]\,$.

\noi \Output $P_1$ the squarefree part of $P$.

\sni If the characteristic is zero then $P_1=P/\GCD(P,P')$.

\sni If the characteristic is $p$ then let  $P_1=1$ and:

\noi Iterate the following process:

\noi $\hspace{5.5mm}$ Beginning with $R=P$ iterate the following process:

\noi $\hspace{11mm}$        If $R=Q(X^p)$ then replace $R$ by $R^{1/p}$
else replace $R$ by $R/\GCD(R,R')$

\noi $\hspace{5.5mm}$ until you find $\GCD(R,R')=1$.

\noi $\hspace{5.5mm}$ Replace $P_1$ by $P_1\cdot R$

\noi $\hspace{5.5mm}$ Iterate the following process:

\noi $\hspace{11mm}$        Replace $P$ by $P/\GCD(P,R)$

\noi $\hspace{5.5mm}$ until  you find $\GCD(P,R)=1$

\noi until $P=1$.
\end{proof}
We suggest that the reader apply the algorithm to a polynomial of the
form $Q_1(X^p)^2Q_2(X^{p^2})$ with $p\ne 2$, in order to see why the
loops in this algorithm are necessary.

\begin{proof}{AlgorithmPerfectTriangularD5}
{\em (solving computational problem \ref{comput recursive D5} in the case
of a perfect field)}

\noindent We assume that $\K$ is a perfect field. In the characteristic
$p$  case we assume that getting $p$-th roots is explicit inside $\S$.

\sni In a first big step we replace the initial system by a disjunction of
coprime systems that are ``squarefree".

\noi For each polynomial in the triangular system,
we use {\bf  SquarefreePart}  and (recursively)
{\bf  SquarefreeD5} to replace it by a ``squarefree" polynomial.

\noi More precisely, first we replace $P_1$ by its squarefree part $S_1$.

\noi Then we try to apply {\bf  SquarefreePart} to the polynomial $P_2$
as if the quotient algebra $\K [X_1]/S_1(X_1)$ were a field.
If this is not possible, {\bf  SquarefreeD5} produces a splitting of
$S_1$. In each branch so created the computation is possible and we can
replace $P_2$ by its squarefree part.

\noi For example, we may get three branches with the following
properties. In the first one, the squarefree polynomial $P_{1,1}$
replaces $P_1\,$, and $P_2$ is already squarefree, so that
$P_{2,1}=P_2\,$. In the second one, the squarefree polynomial $P_{1,2}$
replaces $P_1\,$, and the squarefree part of $P_2$ is given by $P_{2,2}$
with degree ${\rm deg}(P_2)-1 $. In the third one, $P_{1,3}$ replaces
$P_1$ and the squarefree part of $P_2$ is given by $P_{2,3}$ with degree
${\rm deg}(P_2)-4 $. Then we introduce $P_3$ in every branch previously
created and try to apply {\bf SquarefreePart} to the polynomial $P_3$ as
if the corresponding quotient algebra $\K [X_1,X_2]/\gen{P_{1,i}(X_1),
P_{2,i}(X_1,X_2)}$ were a field. If this is not possible,
{\bf SquarefreeD5} produces a splitting of $P_{1,i}$ or $P_{2,i}$.

\noi And so on.

\noi When we have introduced all $P_i$'s, we get a tree. Each leaf of
the tree corresponds to a new triangular system where all successive
polynomials replacing the $P_i$'s are ``strongly squarefree" (the
squarefreeness is certified by a Bezout identity in the suitable quotient
algebra). Distinct leaves correspond to coprime triangular systems.
So the set of root vectors of $\P$ is partitioned into distinct subsets,
each one corresponding to a leaf of the tree.

\sni Now we describe the second ``big step".
At each leaf of the tree we search for the signs of the $Q_j$'s
using {\bf  SquarefreeD5} as if the corresponding quotient algebra were a
field. If this is not possible, new splittings are produced.
\end{proof}

\begin{remark}
\label{remMultisets}
{\rm Slight variants of the above algorithms give a partition of the
{\em multiset} of solutions of the triangular system $\P$ in disjoint
multisets that are defined by coprime triangular systems
$\overline{{S'}{}^{(j)}}$, each $Q_i$ having a constant sign at
the zeros of each $\overline{{S'}{}^{(j)}}$.
}
\end{remark}

\begin{remark} \label{rem QEACF}
{\rm  The above algorithms can be generalized in order
to search systematically for solutions of any
system of sign conditions: equalities need not be in a triangular form.
So they can be seen as quantifier elimination algorithms
in the first order theory of algebraically closed extensions
of some explicitly given field $\K $.
}
\end{remark}

In the following subsection we show that the same kind of computations are
possible in the case of valued fields.

\subsection{Dynamic algebraic closure of a valued field} \label{subsec suc
ref}

\subsubsection*{Roots of one polynomial}

The valued algebraic closure of $(\K ,{\V})$ is well determined up to
isomorphism. So the following computational problem makes sense.
\begin{comput} \label{comput sim val} {\em  (Simultaneous values)} \\
\Inp polynomials $P$ (monic) and $Q_1,\ldots,Q_r$ in $\K[X]$.
Call $[x_1,\ldots,x_d]$ the multiset of roots of $P$ in
$\Kac$.

\noi \Output The multiset $[\left(v(x_i),v(Q_1(x_i)),\ldots,v(Q_r(x_i))
\right) ]_{i=1,\ldots,d}$ of $(r+1)$-tuples of values.
\end{comput}
%
This problem is solved by the following algorithm.

\begin{proof}{Algorithm SimVal}
{\em (solving computational problem \ref{comput sim val})}

\noi We start with the case $r=1$. Assume w.l.o.g.\ that $P(0)\neq0$.
The multiset $[\nu_i]_{i=1,\ldots,d}$ of (finite) values of the $x_i$'s
is given by the \NPA for $P$.

\sni
For $m,n\in \N$, the polynomial
$$S_{m,n}(X)=(X-x_1^mQ_1(x_1)^{n})\cdot\ldots\cdot
(X-x_d^mQ_1(x_d)^{n})
$$
is the characteristic polynomial of the matrix $A^m(Q_1(A))^{n}$ where $A$
is the companion matrix of $P$.

\noi So, using the Newton polygon of $S_{m,n}$ we know the multiset
$$[m\,v(x_i)+n\,v(Q_1(x_i))]_{i=1,\ldots,d} =
[m\,\nu_i+n\,\nu_{1,i}]_{i=1,\ldots,d}
$$
for any $(m,n)$.

\noi We compute first the multiset $[\nu_{1,i}]_{i=1,\ldots,d}\>$.

\noi We want to compute the correct pairing between the two multisets
$[\nu_i]_{i=1,\ldots,d}$ and $[\nu_{1,i}]_{i=1,\ldots,d}\>$.

\noi Assume first that no $\nu_{1,i}$ is infinite.

\noi Let us call a {\it  bad coincidence for $n_1$} an equality
$$ \nu_i+n_1\,\nu_{1,h}=\nu_j+n_1\,\nu_{1,k} \quad {\rm with}\ \
\nu_i\not= \nu_j,
\quad i,j,h,k\in\left\{1,\ldots,d\right\}
\>.$$
If there is no bad coincidence for some $n_1$ then we can state this
fact by considering the two sets $\{\nu_i\ :i=1,\ldots,d\}$ and
$\{\nu_{1,i}\ :i=1,\ldots,d\}$. Note also that there are at most
$(d(d-1)/2)^2$ ``bad values" of $n_1$. So we can find a ``good" $n_1$
by a finite number of computations.
Fix a ``good" $n_1$. From the multisets $[\nu_i]_{i=1,\ldots,d}$ and
$[\nu_{1,i}]_{i=1,\ldots,d}$ we deduce the multiset
$[\nu_i+n_1\,\nu_{1,j}]_{i=1,\ldots,d,j=1,\ldots,d}$.
Now, $n_1$ being ``good'', the multiset
$[\nu_i+n_1\,\nu_{1,i}]_{i=1,\ldots,d}$ (obtained by the \NPA applied
to $S_{1,n_1}$) can be read as a submultiset of
$[\nu_i+n_1\,\nu_{1,j}]_{i=1,\ldots,d,j=1,\ldots,d}\,$.  This gives us
the pairing between the multisets $[\nu_i]_{i=1,\ldots,d}$ and
$[\nu_{1,i}]_{i=1,\ldots,d}\,$.\\
For example, assume that
$$[\nu_i]_{i=1,\ldots,9}=3[\alpha_1]+4[\alpha_2]+2[\alpha_3],\qquad
[\nu_{1,i}]_{i=1,\ldots,9}=2[\beta_1]+2[\beta_2]+2[\beta_3]+3[\beta_4]$$
and that the number 5 is good, i.e., the twelve values
$\alpha_i+5\beta_k$ are distinct. Computing the multiset
$[\nu_i+5\,\nu_{1,i}]_{i=1,\ldots,9}$, we find, e.g.,
$$\begin{array}{c}
[\alpha_1+5\beta_1]+ 2[\alpha_1+5\beta_4]+ [\alpha_2+5\beta_4]+
  2[\alpha_2+5\beta_2]+ \\
\;\;+[\alpha_2+5\beta_3] + [\alpha_3+5\beta_1]+ [\alpha_3+5\beta_3]
,\end{array}$$
and we get the pairing
$$
  [(\alpha_1,\beta_1)]+ 2[(\alpha_1,\beta_4)]+ [(\alpha_2,\beta_4)]+
  2[(\alpha_2,\beta_2)]
+[(\alpha_2,\beta_3)] + [(\alpha_3,\beta_1)]+ [(\alpha_3,\beta_3)]\>.
$$

\noi {\em Comment}: the multiset $[x_i]_{i=1,\ldots,d}$ is, as a root
multiset, made of ``indiscernible elements".
The knowledge of the multiset $[\nu_i]_{i=1,\ldots,d}$ introduces some
distinction between the roots (if the $\nu_i$'s are not all equal).
The knowledge of the multiset
$[\nu_i+n_1\,\nu_{1,i}]_{i=1,\ldots,d}$ (with a ``good" $n_1$) induces
a finer distinction between the roots.

\sni We remark that the case where some $Q_1(x_i)$'s equal zero can also
be done correctly by a slight modification of the previous algorithm.
Nevertheless, when such a case appears, it seems more natural to
use the technique of dynamical evaluation (see \cite{DD} and section
\ref{subsec DAC}). If not all $Q_1(x_i)$'s equal zero (which is a
trivial case), then one can compute a factorization of $P$ in a product
of two coprime polynomials $P_1$ and $P_2$
by applying algorithm {\bf BasicD5} to $P$ and $Q_1$. Then we can study
separately the roots of these two polynomials. Moreover, the following
steps of the algorithm are clearer if all $Q_1(x_i)$'s are distinct from
zero.

\sni Next we show that analogous arguments work for the general case.
It will be sufficient to show how the case $r=2$ works.
Set $\nu_{2,i}=v(Q_2(x_i))$.
We  have computed the correct pairing
$[(\nu_1,\nu_{1,1}),(\nu_2,\nu_{1,2}),\ldots ,(\nu_d,\nu_{1,d})]$
between the multisets
$[\nu_i]_{i=1,\ldots,d}$ and $[\nu_{1,i}]_{i=1,\ldots,d}$.
We know also a ``good" integer $n_1$.
We can assume w.l.o.g. that all $\nu_{1,i}$'s and $\nu_{2,i}$'s are
finite.
We compute first the multiset $[\nu_{2,i}]_{i=1,\ldots,d}$.
Let us call a {\it  bad coincidence for $n_2$} an equality
$$ \nu_i+n_1\,\nu_{1,i}+n_2\,\nu_{2,h}=\nu_j+n_1\,\nu_{1,j}+n_2\,\nu_{2,k}
\quad {\rm with}\ \  \nu_i+n_1\,\nu_{1,i}\not= \nu_j+n_1\,\nu_{1,j}
\>.$$
If there is no bad coincidence for some $n_2$ then we can state this
fact
by considering the two sets $\{\nu_i+n_1\,\nu_{1,i}\ :i=1,\ldots,d\}$ and
$\{\nu_{2,i}\ :i=1,\ldots,d\}$.
We choose such an integer $n_2$. And so on.
\end{proof}

\begin{remark} \label{rem Disc elem}
\rm\parindent0mm

Assume that $P$ is a squarefree polynomial, so the $x_i$'s are in the
separable closure $\K ^{\rm sep}$ of $(\K ,{\V})$.  Assume that
algorithm {\bf SimVal} has shown that some list of values
$(\nu_i,\nu_{1,i},\ldots,\nu_{r,i})$ corresponds to only one root of
$P$.  It is clear from the abstract definition of the henselization
that such a ``discernible" element over $(\K ,{\V})$ is inside the
henselization $\Kh$ of $(\K ,{\V})$.  A perhaps surprising
computational consequence is that, since the henselization is an
immediate extension, when algorithm {\bf SimVal} isolates (or
discerns) some root of $P$, then the corresponding list of values is
made only of ``integer values'', i.e., values of elements of $\K$
``without integer denominator''. We can prove this constructively:

First, using computations in the henselization $\Kh$ as defined in
\cite{KL}, one can prove (cf.~\cite{lapin}) the following lemma:

\begin{lemma}
If the polynomial $P\in \Kh[X]$ has roots $x_1,\dots,x_d$ and if the
$d$-tuple
$[v(Q(x_1)),\dots,v(Q(x_d))]$ (provided by {\bf SimVal} applied to
$P,Q$ or by any other way) is equal to $d_1[\alpha_1]+ \dots+
d_k[\alpha_k]$, with $\alpha_i\neq\alpha_j$ (for $i\neq j$),
then one can factorize $P=P_1\dots P_k$ in $\Kh[X]$ ($\deg P_i=d_i$),
such that, if the roots of $P_i$ are $y_1,\dots,y_{d_i}$ the
$d_i$-tuple $[v(Q(y_1),\dots,v(Q(y_{d_i}))]$ is equal to
$d_i[\alpha_i]$.
\end{lemma}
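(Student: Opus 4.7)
The plan is to reduce the statement to a single Hensel-type factorization, applied not to $P$ itself but to the Tschirnhaus transform of $P$ by $Q$. First I would form the polynomial
$$T_Q(Z)\;=\;\prod_{j=1}^{d}\bigl(Z-Q(x_j)\bigr).$$
By Proposition \ref{propTschir} (applied to the trivial triangular system consisting of $P$ alone), $T_Q$ is the characteristic polynomial of multiplication by the class of $Q$ in the $\Kh$-algebra $A:=\Kh[X]/\gen{P}$, hence $T_Q\in\Kh[Z]$. By hypothesis the multiset of values of the roots of $T_Q$ is $d_1[\alpha_1]+\dots+d_k[\alpha_k]$ with the $\alpha_i$ pairwise distinct, so by the Newton Polygon Algorithm the Newton polygon of $T_Q$ has exactly $k$ distinct slopes $-\alpha_1,\dots,-\alpha_k$ of horizontal lengths $d_1,\dots,d_k$ respectively.

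Next I would invoke henselianity. Since $\Kh$ is henselian, the single-slope pieces of the Newton polygon lift: after the standard change of variable that brings each slope to integer value $0$ and reading off the residue polynomial, Hensel's lemma yields a factorization $T_Q=T_1\cdots T_k$ in $\Kh[Z]$ where each $T_i$ is monic of degree $d_i$ with single-slope Newton polygon corresponding to $\alpha_i$. In particular the $T_i$ are pairwise coprime in $\Kh[Z]$ (their sets of roots in $\Kac$ have disjoint values), so Bezout provides $U_i\in\Kh[Z]$ with $\sum_i U_i\prod_{j\neq i}T_j=1$.

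I would then transport this coprime factorization back to a factorization of $P$ via idempotents. Let $q\in A$ be the class of $Q$. By Cayley--Hamilton $T_Q(q)=0$ in $A$, and the elements $e_i:=\bigl(U_i\prod_{j\neq i}T_j\bigr)(q)\in A$ satisfy $\sum e_i=1$, $e_ie_{i'}=0$ for $i\neq i'$, and $e_i^2=e_i$. This idempotent decomposition gives $A\cong A_1\times\cdots\times A_k$, and since finite product decompositions of $\Kh[X]/\gen{P}$ correspond bijectively to coprime monic factorizations $P=P_1\cdots P_k$ in $\Kh[X]$ (with $A_i\cong\Kh[X]/\gen{P_i}$), we obtain the desired factorization.

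Finally I would check the degree and root content of each $P_i$. Extending scalars from $\Kh$ to $\Kac$ diagonalizes $A\otimes_{\Kh}\Kac\cong\prod_{j=1}^{d}\Kac$, with the $j$-th component indexed by the root $x_j$ and the image of $q$ equal to $Q(x_j)$. In this description the idempotent $e_i$ is $1$ precisely on those components where $T_i(Q(x_j))=0$, equivalently where $v(Q(x_j))=\alpha_i$ (because the other $T_{i'}$ have all their roots of value $\alpha_{i'}\neq\alpha_i$ and so do not vanish at $Q(x_j)$). Thus $P_i=\prod_{j:\,v(Q(x_j))=\alpha_i}(X-x_j)$, which has degree $d_i$ and the prescribed multiset of values. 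The main obstacle is the Hensel lifting step for $T_Q$: formulating and applying Hensel's lemma to the multi-slope Newton polygon so as to get the coprime factorization $T_Q=T_1\cdots T_k$ in $\Kh[Z]$. Once that is in hand, the rest is essentially linear algebra inside $A$.
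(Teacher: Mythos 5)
The paper does not give a proof of this lemma in the text: it is stated as a consequence of the explicit construction of the henselization in \cite{KL} and of results in \cite{lapin} (Perdry's thesis), so there is no written argument to line up against. Your proof therefore has to be judged on its own terms, and it is correct in outline and in spirit close to what the citation suggests, but one step is stated too loosely.

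Your route --- form the Tschirnhaus transform $T_Q$ of $P$ by $Q$, split $T_Q$ in $\Kh[Z]$ into single-slope pieces $T_1,\dots,T_k$ using henselianity, extract orthogonal idempotents from a Bezout relation among the $T_i$, and read off the corresponding coprime factorization $P=P_1\cdots P_k$ of $P$ in $\Kh[X]$, checking degrees and root content after base change to $\Kac$ --- is sound. The Cayley--Hamilton step, the orthogonality of the $e_i$, the correspondence between idempotent decompositions of $\Kh[X]/\gen{P}$ and coprime factorizations of $P$, and the identification of which components of $A\otimes_{\Kh}\Kac$ each $e_i$ selects (valid here because the remark housing the lemma assumes $P$ squarefree with roots in $\K^{\rm sep}$) are all right.

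The one point that needs tightening is your justification of the factorization $T_Q = T_1\cdots T_k$ in $\Kh[Z]$. You invoke ``the standard change of variable that brings each slope to integer value $0$'', but the slopes $\alpha_i$ lie only in the divisible hull $\GKd$, not necessarily in $v(\Kh)=\GK$, so there is in general no element of $\Kh$ by which to rescale $Z$. The correct justification is either (a) Galois descent: over a henselian base the valuation extends uniquely to $\Kac$, so any $\Kh$-automorphism preserves values, the set of roots $Q(x_j)$ of value $\alpha_i$ is stable, and the monic subproduct over that set has coefficients fixed by $\mathrm{Gal}(\K^{\rm sep}/\Kh)$ hence in $\Kh$; or (b) a multi-factor Hensel/Newton-polygon lemma over henselian valued fields, which produces the single-slope factorization directly without any rescaling. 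Either argument closes the gap; once $T_Q=T_1\cdots T_k$ is established in $\Kh[Z]$, the rest of your proof goes through.
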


Then if some list of values $(\nu_i,\nu_{1,i},\dots,\nu_{r,i})$
corresponds to only one root of $P$, we let\\
$n_0=\#\{ j :\nu_j=\nu_i \}$,\\
$n_1=\#\{ j : \nu_j=\nu_i \mbox{ and }\nu_{1,j}=\nu_{1,i}\}$,\\
\dots\\
$n_r=\#\{ j : \nu_j=\nu_i \mbox{ and }\nu_{k,j}=\nu_{k,i}
\ \ k=1,\dots,r \}=1$\\

{\def\ab{\allowbreak}
The previous result applied to $P(X)$ and $Q(X)=X$ provides a factor
$P_0$ of $P$, with degree $n_0$; then applied to $P_0(X)$ and $Q_1(X)$,
it provides a factor $P_1$ with degree $n_1$, and so on. Finally, we
obtain a factor $P_r$ of degree $n_r=1$. So the corresponding root is in
$\Kh$. The computations in $\Kh$ prove that the list of
values is made only of ``integer values''; one can compute explicitly
elements of $\K$ having the same value. More precisely, one can
compute $z_0,z_1,\dots,z_r\in\K$, such that $x_i=z_0(1+\nu_0),\ab
Q_1(x_i)=z_1(1+\nu_1),\ab\dots,\ab Q_r(x_i)=z_r(1+\nu_r)$, with
$v(\nu_i)>0$ for all $i$.}
\end{remark}

\subsubsection*{Root vectors of triangular systems}

Algorithm {\bf SimVal} says that ``we can compute in $\K [x]$"
where $x$ is a root of $P$ satisfying certain ``compatible value
conditions".  We know how many roots of $P$ correspond to a system of
compatible value conditions. Computing in $\K [x]$ means that we can
get ``any brute information concerning the valuation in this field",
more precisely, we can decide, for any new polynomial $Q$, if the
value of $Q(x)$ is well determined or not. And we can compute the
value(s).  When several possibilities for $v(Q(x))$ appear, choosing
one possible value, we refine our description of $\K [x]$.

So even if $\K [x]$ is not a priori a completely well determined valued
field, we can nevertheless always do as if it was completely well
determined.
And we get recursively the following computations, exactly as in section
\ref{subsec DAC}.

More precisely, our computational problem is the following.

\begin{comput} \label{comput severalg} \ \\
{\em (computing in extensions generated by several successive
algebraic elements)}

\noi \Inp
\begin{itemize}
\item  A triangular system of polynomials
$\P = (P_1,\ldots ,P_n)$:
$$P_1(X_1)\in \K [X_1],\,P_2(X_1,X_2)\in \K
[X_1,X_2],\,\ldots,P_k(X_1,\ldots,X_k)\in \K [X_1,\ldots,X_k]
\>.$$
\item A finite list of polynomials $Q_1,\ldots,Q_r$ in $\K
[X_1,\ldots,X_k]$.
\end{itemize}

\noi \Output
\begin{itemize}
\item The multiset of $(k+r)$-tuples of values
$$[\left(v(x_1),\ldots,v(x_k),v(Q_1(\x)),
\ldots,v(Q_r(\x))\right)]_{\x=(x_1,\ldots,x_k)\in R}$$
where $R$ is the multiset of root vectors of $\P$
(this multiset has cardinality $d=\prod_i\deg_{X_i}(P_i)$).
\end{itemize}
\end{comput}
%
This problem is solved by the following algorithm.

\begin{proof}{Algorithm TriangularSimVal}
Use recursively algorithm {\bf SimVal}.
\end{proof}

\subsubsection*{Graph of roots}
The following algorithm can be seen as a particular case of the previous
one. We denote by $\mu(P,a)$ the multiplicity of $a$ as root of the
univariate polynomial $P$ (if $P(a)\neq 0$ we let $\mu(P,a)=0$).
\begin{comput} \label{comput graphroot} \ \\
{\em (computing the ultrametric graph of roots of a family of univariate
polynomials)}

\noi \Inp
\begin{itemize}
\item A finite family of univariate polynomials
$\P = (P_1,\ldots ,P_s)$ in $\K [X]$.
\end{itemize}

\noi \Output
\begin{itemize}
\item The number $N$ of distinct roots of $P_1\cdots P_n$.
\item For some ordering $(x_1,\ldots,x_N)$ of these roots the finite family
$$ \left( (\mu(P_i,x_j))_{i\in[1,s],j\in[1,N]}, (v(x_j-x_\ell))_{1\leq
j<\ell\leq N}\right)
\>.$$
\end{itemize}
\end{comput}
Note that there are many possible answers, by changing the order of the
roots. All correct answers are isomorphic.
\begin{proof}{Algorithm GraphRoots}
First a recursive use of {\bf BasicD5} allows to find a finite
multiset of pairwise coprime polynomials $(R_1,\ldots,R_r)$ such that
each $P_i$ is a product of some $R_k$'s.
%
So we can assume w.l.o.g. that the $P_i$'s are pairwise coprime. If
$\deg(P_i)=n_i$ we introduce the roots
$x_{i,1},\ldots,x_{i,n_{i}}$ of $P_i$ through the triangular system\\
\vskip\abovedisplayskip
\def\eqq{\hbox to 0pt{\hss$=$\hss}}
\noi\hbox to \hsize{\hss$\begin{array}{rcl}
P_{i,1}(X_{i,1})&\eqq&P_i(X_{i,1}) \\
&&\\
P_{i,2}(X_{i,1},X_{i,2})&\eqq&
\frac{ P_{i,1}(X_{i,2})-P_{i,1}(X_{i,1}) }
{X_{i,2}-X_{i,1}} \\
&&\\
P_{i,3}(X_{i,1},X_{i,2},X_{i,3})&\eqq&
\frac {P_{i,2}(X_{i,1},X_{i,3})-P_{i,2}(X_{i,1},X_{i,2})}
{X_{i,3}-X_{i,2}}\\
\vdots\qquad \qquad & \hbox to 0pt{\hss\vdots\hss}  & \qquad \qquad \vdots
\\
P_{i,n_{i}}(X_{i,1},\ldots,X_{i,n_{i}})&\eqq&
  \frac {P_{i,n_i-1}(X_{i,1},\ldots,X_{i,n_i-2},X_{i,n_i})-
P_{i,n_i-1}(X_{i,1},\ldots,X_{i,n_i-2},X_{i,n_i-1})}
{X_{i,n_i}-X_{i,n_i-1}}\\
&&\\
P_{i,1}(x_{i,1})&\eqq&0\\
P_{i,2}(x_{i,1},x_{i,2})&\eqq&0\\
P_{i,3}(x_{i,1},x_{i,2},x_{i,3})&\eqq&0\\
\vdots\qquad\qquad& \hbox to 0pt{\hss\vdots\hss}  &  \vdots \\
P_{i,n}(x_{i,1},\ldots,x_{i,n_{i}})&\eqq&0
\end{array}$\hss}\break
\vskip\belowdisplayskip

The $P_{i,k}$'s give all together  a triangular system and we can apply
{\bf TriangularSimVal} for finding the values $v(x_{i,k}-x_{i',k'})$.
We remark that we can use a simplified form of {\bf TriangularSimVal}
since all possible results are isomorphic and we need only one of these
results. E.g., in the first step we compute the multiset
$[(v(x_{1,k}-x_{1,k'})_{1\leq k<k'\leq n_1}]$ but we select arbitrarily
one value as the good one w.r.t.\ some ordering of the roots, and so on.
\end{proof}
\begin{remark}
\rm\parindent0mm
There are probably some shortcuts allowing to give this ultrametric
graph in a quicker way: for example, for a single polynomial, it is
easy to compute the multiset of values $[v(x_i-x_j)]_{i\neq j}$ {\it
without}\/ knowing exactly to which edge each value corresponds; there
might be a way (at least in a great number of cases) to reconstruct the
graph (up to isomorphism).
\end{remark}

%
%
\section{Quantifier elimination}
\label{QE}
%


The aim of this section is to give a transparent proof of the
following well known theorem (cf.\ \cite{Wei}).

\begin{theorem} \label{belette}
The theory of algebraically closed valued fileds (with fixed
characteristics) admits quantifier elimination.
\end{theorem}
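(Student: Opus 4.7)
The plan is to eliminate existential quantifiers one at a time using the dynamic machinery of the previous sections, guided by the geometric observation that every definable subset of $\Kac$ in one variable is built from ``balls'' around the roots of the polynomials involved. Fix a formula $\exists x\,\phi(x,\bar c)$ with $\phi$ quantifier-free and parameters $\bar c$ in $\S$. After rewriting $\phi$ in disjunctive normal form and distributing the quantifier over the outer disjunction, we may assume $\phi$ is a conjunction of atomic formulas and their negations, i.e.\ of conditions $P_i(x)=0$, $Q_j(x)\neq 0$, $R_k(x)\preceq S_k(x)$, and $R'_\ell(x)\not\preceq S'_\ell(x)$. The dynamic algorithms \textbf{BasicD5} and \textbf{TriangularBasicD5} produce, whenever needed, splittings of the parameter space into a finite disjunction of branches, each described by quantifier-free conditions on $\bar c$, so it is enough to produce, inside each branch, a quantifier-free formula equivalent to $\exists x\,\phi$.

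In the \emph{algebraic branch}, where some $P_i$ is non-zero, the variable $x$ is forced to be algebraic over $\K$. By \textbf{BasicD5} we may assume a single defining equation $P(x)=0$ with $P$ coprime to each $Q_j$. Using the generalized Tschirnhaus transformation of Section~\ref{subsec Tschirnhausen} to reduce rational conditions to polynomial ones, \textbf{SimVal} outputs the multiset of value tuples $(v(R_k(x_i)),v(S_k(x_i)),\ldots)_i$ indexed by the roots $x_i$ of $P$ in $\Kac$. The formula $\phi$ is satisfied by some root of $P$ exactly when some entry of this finite multiset obeys the required (in)equalities; these are decided by finitely many comparisons between elements of $\GKd$, and by Example~\ref{example val group} each such comparison is itself a quantifier-free condition on $\bar c$.

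In the \emph{free branch}, no non-trivial $P_i$ is present. Let $T\in\K[X]$ be the product of all polynomials occurring in $\phi$ (including numerators and denominators produced by the Tschirnhaus reduction), and apply \textbf{GraphRoots} to obtain the distinct roots $\alpha_1,\ldots,\alpha_N$ of $T$ in $\Kac$ together with the ultrametric data $v(\alpha_i-\alpha_j)$ and the multiplicities. For every $x\in\Kac$, the tuple $\bigl(v(x-\alpha_j)\bigr)_{j}$ is essentially controlled by the single parameter $\gamma:=v(x-\alpha_1)$: by the ultrametric inequality, if $\gamma\neq v(\alpha_1-\alpha_j)$ then $v(x-\alpha_j)=\min\bigl(\gamma,v(\alpha_1-\alpha_j)\bigr)$, while if $\gamma=v(\alpha_1-\alpha_j)$ then $v(x-\alpha_j)$ takes some value in $[\gamma,+\infty]$ that can be chosen using the freedom left by the divisibility of $\GKd$ and the algebraic closedness of $\ResKac$. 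Factoring each polynomial $R$ occurring in $\phi$ over $\Kac$ as $R(X)=r\prod_j(X-\alpha_j)^{e_j}$, the value $v(R(x))$ becomes a piecewise-linear function of $\gamma$ with breakpoints among the $v(\alpha_i-\alpha_j)$'s, and $\exists x\,\phi$ translates into the existence of $\gamma\in\GKd$ satisfying a Boolean combination of linear (in)equalities over the constants $v(\alpha_i-\alpha_j)$ and the $v(\mathrm{lc}(R))$'s. Quantifier elimination in the theory of divisible ordered abelian groups removes $\exists\gamma$, leaving a quantifier-free formula over these constants, which by Example~\ref{example val group} is quantifier-free in $\bar c$.

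The hardest step is the free branch: one must show rigorously that every realizable tuple $\bigl(v(x-\alpha_j)\bigr)_j$ is of the ``one-parameter'' shape indicated above and that no residue-field obstruction prevents the realization of a candidate $\gamma$ by an actual element of $\Kac$. In the critical case $\gamma=v(\alpha_1-\alpha_j)$ for several indices $j$, the simultaneous realization requires a careful analysis of the residue-level linear system attached to these coincidences, where algebraic closedness of $\ResKac$ is exactly what is needed to solve it. Fixing both characteristics is what makes this residue-level analysis uniform across branches, so that the whole procedure genuinely produces a quantifier-free formula in the language of valued fields.
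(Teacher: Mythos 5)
Your algebraic branch is essentially the paper's ``first case'' of the univariate existential decision procedure (Theorem~\ref{udp}): factor out a defining polynomial, run {\bf SimVal}, check the finitely many tuples of values. That part is fine. The gap is in your free branch, which you correctly single out as the hardest step but do not in fact resolve.

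The claim that the tuple $\bigl(v(x-\alpha_j)\bigr)_j$ is ``essentially controlled'' by the single parameter $\gamma=v(x-\alpha_1)$, and hence that each $v(R(x))$ is a piecewise-linear \emph{function} of $\gamma$, is not correct. When $\gamma$ hits a critical level, i.e.\ $\gamma=v(\alpha_1-\alpha_j)$ for one or more $j$, the value $v(x-\alpha_j)$ is only constrained to lie in $[\gamma,\infty]$, so $v(R(x))$ is \emph{not} a function of $\gamma$ there; worse, the different coordinates cannot be chosen independently. For instance, if $\gamma=v(\alpha_1-\alpha_2)=v(\alpha_1-\alpha_3)=v(\alpha_2-\alpha_3)$, you cannot simultaneously realize $v(x-\alpha_2)>\gamma$ and $v(x-\alpha_3)>\gamma$, because that would force $v(\alpha_2-\alpha_3)>\gamma$. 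So the realizable tuples at a critical $\gamma$ are cut out by ultrametric constraints among \emph{all} the roots, not just by $\gamma$. Your proposal to hand a Boolean combination of linear inequalities in the single variable $\gamma$ to DOAG${}_\infty$ therefore loses exactly the information that decides realizability in the critical cases.

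What closes this gap in the paper is Lemma~\ref{ultra-graph} (the ultrametric graph lemma): introduce one $\Gamma$-variable $a_j$ for each $v(x-r_j)$, observe that $v(R(x))=v(\mathrm{lc}(R))+\sum_j \mu_j a_j$ is genuinely linear in the $a_j$'s, and characterize the realizable tuples $\bar a$ by the quantifier-free condition $\bigwedge_{i<j} T(\varepsilon_{ij},a_i,a_j)$ expressing that the completed graph is ultrametric. That lemma is proved using the fact that the residue field of $\Kac$ is infinite (algebraic closedness of $\ResKac$), which is the residue-level input you gesture at; divisibility of $\GKd$ is not the operative assumption there. With that lemma in hand, the free branch reduces cleanly to an existential problem in DOAG${}_\infty$ with the $N$ variables $a_1,\dots,a_N$ plus the auxiliary ones from the atomic conditions, exactly as in the paper's second case of Theorem~\ref{udp}. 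You should either cite that lemma or reprove it; ``a careful analysis of the residue-level linear system'' is where the actual argument lives, and it cannot be compressed into the one-parameter picture.

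One smaller point: uniformity in the parameters $\bar y$ is not automatic and the paper devotes the ``Algorithms with parameters'' proposition to it (each parameter-dependent step -- choosing the Newton polygon shape, choosing a ``good'' $n_1$, etc.\ -- must itself produce a finite exclusive disjunction of quantifier-free conditions on $\bar y$). Your proof waves at this via {\bf BasicD5}/{\bf TriangularBasicD5}, which is the right instinct, but it deserves explicit treatment since it is precisely what turns a decision procedure into a quantifier elimination.
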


First we give a sketch of the proof of this theorem.
Our algorithm is a kind of ``cylindric algebraic decomposition"
(in the real closed case see, e.g., \cite{BCR}).
Given a finite set of multivariate polynomials, we choose a variable
as being the main variable and we consider the other ones as
parameters.

We settle in subsection \ref{subsecUEDP} an existential
decision procedure for a quantifier free formula with only one
variable:
given a finite set $S$ of univariate polynomials, we give a complete
description of the ``valued line $\Kac$" w.r.t. $S$.

More precisely, we give first a formal name to
each root of each polynomial in $S$, and we compute the ultrametric distance
between each pair of these roots. We compute also the multiplicities
of these roots and all the values
$v(P_i(x_j))$ for each root $x_{j}$ and each polynomial $P_i$.
This job is done by algorithm {\bf GraphRoots}.

Next, from these datas, we are able to test if a given conjunction of
elementary assertions concerning the $v(P_i(\xi))$'s
is realizable by some $\xi$ of the line $\Kac$.
In order to make this test we need a key geometric lemma,
concerning {\it ultrametric graphs}.
We explain this lemma in section \ref{subsecUG}.

The structure of our existential univariate decision procedure is
very simple.
This implies a kind of uniformity in such a way that
the algorithm can be performed ``with parameters'', exactly
as {\bf BasicTriangularD5} is nothing but a parametrized version
of {\bf Basic D5}. This gives a good way for eliminating the quantifier
in a formula with only one existential quantifier. So the work done
in our final section \ref{subsecQEL} will be a careful verification of
uniformity for the algorithms used in section \ref{subsecUEDP}.

Finally, the general elimination procedure follows by usual tricks.

\ms We now give general explanations about notations and technical
tools needed in the algorithms.

As in \cite{Wei} we use a two-sorted language,
$L=(L_F,L_\Gamma,v)$. The language of fields $L_F=\{
0,1,+,-,.\}$ is the $F$-sort. The language $L_\Gamma$ is the
$\Gamma$-sort. There is one more symbol, $v$, which is a
function symbol for the valuation.  The language $L_\Gamma$
consists of the language $L_\Gamma'=\{0,\infty,+,-,< \}$ of ordered
Abelian groups with last element $\infty$ together with a family of
symbols $\{ {\cdot\over q} : q\in\mathbb{N}^*\}$.

By convention $a-\infty =0$ for all $a\in\Gamma$. But there are some
ambiguities as $a-(b-c)$ may not be equal to $a-b+c$. In fact, it is
possible to avoid the sign $-$ for $\Gamma$-formulas, using case
distinctions. For example, we can replace $a-b=c$ by $(b=\infty \land
c=0)\lor a=b+c$. So any quantifier free formula $\Phi$ is equivalent
to a formula written without the $\Gamma$-sign $-$.
In the sequel we assume w.l.o.g.\ that $\Gamma$-terms are always written
without using the $\Gamma$-sign $-$.

Note also that we have no function symbol for the inverse of a nonzero
element inside the field. This is not a restriction. The introduction of
this function symbol would imply some trouble as the necessity of some
strange convention as $x/0=0$ for any $x$.

The theory of algebraically closed non-trivial valued fields is
{\bf ACVF}$(L)$. Recall that the formal theory specifies the
characteristic of the field and of the residue field. In our formulas
there are $F$-variables and $\Gamma$-variables, $F$-terms and
$\Gamma$-terms, and, more important, $F$-quantifiers and
$\Gamma$-quantifiers.

The rules of building terms are the natural ones. We see that the
$F$-terms are formal polynomials in $\mathbb{Z}[x_1,\cdots,x_n]$.
For the $\Gamma$-terms, we avoid the $\Gamma$-sign $-$. Take
$r_1,\dots,r_k\in\mathbb{Q}^{>0}$, and let $f_1,\dots,f_\ell$
(with $\ell\leq k$) be $F$-terms; then
\begin{equation}\label{gammaterm}
%
%
r_1\cdot v(f_1) + \cdots + r_\ell \cdot v(f_\ell)
+ r_{\ell+1}\cdot a_{\ell+1}+\cdots+r_k\cdot a_k
\end{equation}
(where each $a_i$ is a $\Gamma$-variable or a $\Gamma$-constant)  is a
general $\Gamma$-term.
Moreover we remark that such a
$\Gamma$-term can be easily rewritten as
%
$${1\over N}\;( v(f)+ s_{\ell+1}\cdot a_{\ell+1}+
\cdots+s_k\cdot a_k)
$$
where $N,s_j\in \mathbb{Z}^{>0}$.

When we want to make computations inside the algebraic closure of some
explicitly given valued field $(\K,\V)$ we have to use the theory
{\bf ACVF}$(\K,\V)$ where the elements of $\K$ and $\GK$ are added as
constants and the diagram of the valued field $(\K,\V)$ is added as a
set of axioms.

The theory {\bf DOAG}\nobreak${}_\infty$ of divisible ordered Abelian
groups with last element $\infty$ admits quantifier elimination; hence
it is sufficient to eliminate the $F$-quantifiers from an $L$-formula
$\phi$: we obtain an $F$-quantifier free $L$-formula $\phi'$ (most of
the time, this formula has more $\Gamma$-quantifiers than $\phi$), and
we can conclude using the quantifier elimination of {\bf
DOAG}\nobreak${}_\infty$.

This strategy allows us to get a new algorithmic proof of
theorem \ref{belette}, which is the topic of the third section
of \cite{Wei}: {\it The theory {\bf ACVF}$(L)$ admits quantifier elimination}.

\subsection{Ultrametric Graphs} \label{subsecUG}
%
To prove theorem \ref{belette}, we will need a lemma about {\it ultrametric
graphs.} Let $\Gamma$ be the divisible ordered Abelian group
$\GKac$. A graph of vertices $p_1,\dots,p_n$ is a subset $G$ of
$\{p_1,\dots,p_n\}^2$ such that if $(p_i,p_j)\in G$, then
$(p_j,p_i)\in G$. If $(p_i,p_j)\in G$, then it is an \textit{edge} of
$G$. The graph will be called {\it complete} if every pair
$(p_i,p_j)$ is an edge.

We consider graphs labeled by elements of $\Gamma\cup\{\infty\}$: to
each edge $(p_i,p_j)$ we associate an element
$\varepsilon_{ij}\in\Gamma\cup\{\infty\}$, and we impose that
$\varepsilon_{ij}=\varepsilon_{ji}$. Such a graph is called {\it
ultrametric} if every triangle in it is an {\it ultrametric triangle},
that is, has two vertices labeled by the same element of $\Gamma$,
and the third one is labeled by a greater or equal element.  We can
put $\varepsilon_{ii}=\infty$ as a convention, so that degenerated
triangles are ultrametric.

If we define
%
$$t(\varepsilon_{ij},\varepsilon_{ik},\varepsilon_{jk})
\>:\Leftrightarrow\>
(\varepsilon_{ij}=\varepsilon_{ik}) \wedge (\varepsilon_{ij}\leq
\varepsilon_{jk})\>,$$
then
$$T(\varepsilon_{ij},\varepsilon_{ik},\varepsilon_{jk})
\>:\Leftrightarrow\>
t(\varepsilon_{ij},\varepsilon_{ik},\varepsilon_{jk})\vee
t(\varepsilon_{ik},\varepsilon_{jk},\varepsilon_{ij})\vee
t(\varepsilon_{jk},\varepsilon_{ij},\varepsilon_{ik})$$
is the formula asserting that $(p_i,p_j,p_k)$ is an ultrametric
triangle inside the graph~$G$.

The complete graph of vertices $p_1,\dots,p_n$ with edges labeled by
$\varepsilon_{ij}$ is ultrametric if the following formula is true:
$$\bigwedge_{i<j<k}
T(\varepsilon_{ij},\varepsilon_{ik},\varepsilon_{jk})\>.$$
In an algebraically closed valued field, let $a_1,\dots,a_n$ be fixed
elements. Let $\varepsilon_{ij}=v(a_i-a_j)$. Then the complete graph of
vertices $a_1,\dots,a_n$ and of edges $(a_i,a_j)$ labeled by
$\varepsilon_{ij}$ is ultrametric.
%
%
\begin{lemma}[Ultrametric graphs] \label{ultra-graph}
In any formal theory of valued fields implying that the residue field is
infinite,
the assertion
   \[\exists_F\, x\ \bigwedge_{i=1,\dots,n} v(x-a_i)=\beta_i\]
   is equivalent to the formula expressing that the complete graph of
   vertices $a_1,\dots,a_n$ and $x$, with edges $(a_i,x)$ labeled by
   $\beta_i$, is ultrametric. The triangles $(a_i,a_j,a_k)$ being
   ultrametric, this is equivalent to $\bigwedge_{i<j} T_{ij}$ where
   $T_{ij}$ is $T(\varepsilon_{ij},\beta_i,\beta_j)$.
\end{lemma}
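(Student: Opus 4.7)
The plan is to prove the two implications of the equivalence separately.

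For the forward direction, if some $x$ with $v(x-a_i)=\beta_i$ for all $i$ exists, then every triangle in the complete graph is ultrametric. The triangles $(a_i,a_j,a_k)$ are ultrametric simply because they live inside a valued field, and for a triangle of the form $(a_i,a_j,x)$ one writes $a_i-a_j=(x-a_j)-(x-a_i)$ and applies the ultrametric inequality to the triple of values $(\varepsilon_{ij},\beta_i,\beta_j)$.

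For the backward direction I would construct $x$ explicitly. The case in which some $\beta_i=\infty$ is degenerate (it forces $x=a_i$ and reduces to the remaining edge equalities), so I may assume all $\beta_i$ finite. Relabelling, put $\beta_1=\min_i\beta_i$. Inspecting the ultrametric triangle $(a_1,a_i,x)$ whenever $\beta_i>\beta_1$ forces $v(a_1-a_i)=\beta_1$, since in an ultrametric triangle the two smallest sides must coincide. I then pick $\pi$ in the ambient field with $v(\pi)=\beta_1$ and look for $x$ of the form $a_1+\pi\tau$. Setting $\gamma_i=(a_i-a_1)/\pi$, the conditions translate into $v(\tau)=0$ together with $v(\tau-\gamma_i)=\beta_i-\beta_1$ for $i\geq 2$. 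For $i$ with $\beta_i>\beta_1$ this reads $\bar\tau=\bar\gamma_i$ in the residue field; for $i$ with $\beta_i=\beta_1$ it reads $\bar\tau\neq\bar\gamma_i$ (with $\bar\gamma_i$ possibly zero).

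The key combinatorial step is to show that the residues $\bar\gamma_i$ attached to indices $i$ with $\beta_i>\beta_1$ all share a single nonzero value $\bar c$: for any two such indices $i,j$ the ultrametric triangle $(a_i,a_j,x)$ forces $v(a_i-a_j)>\beta_1$, hence $\bar\gamma_i=\bar\gamma_j$. In Case A, where some $\beta_i$ exceeds $\beta_1$, I would set $\bar\tau=\bar c$: for every $j$ with $\beta_j=\beta_1$ the triangle $(a_i,a_j,x)$ with $\beta_j<\beta_i$ forces $\varepsilon_{ij}=\beta_1$, so $\bar\gamma_j\neq \bar c$, and all constraints are simultaneously satisfied. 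In Case B, where every $\beta_i$ equals $\beta_1$, I only need $\bar\tau$ in the residue field minus the finite set $\{\bar\gamma_1,\ldots,\bar\gamma_n\}$; this is precisely the point at which the hypothesis that the residue field is infinite is invoked.

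The main obstacle will be the ultrametric case analysis used to establish the subclaim that all the $\bar\gamma_i$ with $\beta_i>\beta_1$ coincide and that their common residue is automatically distinct from each $\bar\gamma_j$ coming from a minimal-value index. Once these combinatorial observations are in hand, $x=a_1+\pi\tau$ is essentially forced, and the only genuinely non-constructive ingredient — choosing $\bar\tau$ generically in the residue field — is confined to Case B.
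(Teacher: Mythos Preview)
There is a genuine gap in Case~A. Your translation ``for $i$ with $\beta_i>\beta_1$ this reads $\bar\tau=\bar\gamma_i$ in the residue field'' is only a \emph{necessary} consequence of the actual condition $v(\tau-\gamma_i)=\beta_i-\beta_1$, not an equivalent reformulation. Fixing $\bar\tau=\bar c$ only guarantees $v(\tau-\gamma_i)>0$ for those indices; it says nothing about whether that value is exactly $\beta_i-\beta_1$. A concrete counterexample: in $\Q_p$ take $a_1=0$, $a_2=1$, $\beta_1=0$, $\beta_2=1$ (so $\varepsilon_{12}=0$ and $T_{12}$ holds). With $\pi=1$ you get $\gamma_2=1$, $\bar c=\bar 1$; but the lift $\tau=1+p^2$ has $\bar\tau=\bar c$ yet $v(\tau-\gamma_2)=2\neq 1$. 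So the assertion ``all constraints are simultaneously satisfied'' is false as stated.

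Your min-based scheme can be repaired by induction on the number of distinct values among the $\beta_i$: in Case~A the residual problem is to find $\tau$ with $v(\tau-\gamma_i)=\beta_i-\beta_1$ for the indices $i$ with $\beta_i>\beta_1$, which is an instance of the same lemma with strictly fewer distinct target values (the triangle conditions $T(\varepsilon_{ij}-\beta_1,\beta_i-\beta_1,\beta_j-\beta_1)$ are inherited by shift-invariance of $T$); the constraints $\bar\tau\neq\bar\gamma_j$ for the minimal indices $j$ are then automatic because any solution $\tau$ of the residual problem already has $\bar\tau=\bar c$. The paper avoids this recursion entirely by organising the argument around the \emph{maximum} $\beta=\max_i\beta_i$: it shows that whenever $\beta_j<\beta_i$ the condition $v(x-a_i)=\beta_i$ together with $T_{ij}$ already forces $v(x-a_j)=\beta_j$, so one only has to realise the equalities $v(x-a_i)=\beta$ for $i$ in the maximal stratum $I_1$, and that reduces in one step to a residue-avoidance problem (your Case~B).
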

\begin{proof}{Proof}\parindent0mm
Let $S_i(x)$ be the formula $v(x-a_i)=\beta_i$. We prove that
$$\left(\exists_F\, x\  \bigwedge_i S_i(x)\right)
\equi \bigwedge_{i<j} T_{ij}\>.$$
The implication $\Longrightarrow$ is clear.

For the reverse implication $\Longleftarrow$, we first note that
$$\left(T_{ij} \wedge (\beta_j<\beta_i)\right) \impl
\beta_j=\varepsilon_{ij}\>,$$
and that
$$\left(\beta_j=\varepsilon_{ij} \wedge (\beta_j<\beta_i) \wedge
S_i(x) \right) \impl S_j(x)\>.$$
Thus we have the following implication:
\begin{equation}\label{truc}
\left( T_{ij}\wedge (\beta_j<\beta_i) \wedge S_i(x)\right)
\impl S_j(x)\>.
\end{equation}
Hence we need to keep only those indices $i$ for which $\beta_i$ is
maximal among $\beta_1,\dots,\beta_n$. Let
$\beta=\max\{\beta_1,\dots,\beta_n\}$ and $I_1=\{ i\in\{1,\dots,n\}\,:\,
\beta_i=\beta \}$. Assume w.l.o.g. that $1\in I_1$.
We have
$$\bigwedge_{i<j} T_{ij} \wedge \bigwedge_{i\in I_1} S_i(x) \impl
\bigwedge_{1=1,\dots,n} S_i(x)$$
Note that for $i,j\in I_1$, $T_{ij}$ is equivalent to
$\varepsilon_{ij}\geq\beta$, and that $S_i(x)$ is the formula
$v(x-a_i)=\beta$.  We show that
$$\bigwedge_{i<j,\ i,j\in I_1} T_{ij} \impl \exists_F\, x \
\bigwedge_{i \in I_1} v(x-a_i)=\beta\>.$$
If $\beta=\infty$, we have $T_{ij}\impl (a_i=a_j)$ for all $i,j\in
I_1$, and in this case we take $x=a_i$ for any $i\in I_1$.
Now assume that $\beta<\infty$. If $\varepsilon_{ij}>\beta$, we obtain
$\left(S_i(x) \wedge T_{ij}\right) \impl S_j(x)$. We consider the
following case distinction:

\noindent$\bullet$ If $\varepsilon_{ij}>\beta$ for all $i,j\in I_1$ then
   $\left( \bigwedge_{i<j\ i,j\in I_1}T_{ij}\wedge
   S_1(x)\right)\impl\bigwedge_{i \in I_1} S_i(x)$. The formula
   $\exists_F\, x \ S_1(x)$ being always true, we have
   $\bigwedge_{i<j}T_{ij}\impl\exists_F\, x\ \bigwedge_i S_i(x)$.

\noindent$\bullet$ Else, we take in $I_1$ a subset $I_2$ which
   is maximal for the property that $\varepsilon_{ij}=\beta$ for all
   indices $i,j\in I_2$.  It suffices to show that
   $\exists_F\, x\bigwedge_{i\in I_2} S_i(x)$, since
   from the definition of $I_2$ we have
   $$\left( \bigwedge_{i<j,\ i,j\in I_1} T_{ij} \wedge \bigwedge_{i\in
   I_2}S_i(x) \right) \impl \bigwedge_{i\in I_1} S_i(x)\>.$$

\def\ov#1{\mathop{\mathrm{res}}{#1}}
We can assume w.l.o.g.\ that $1\in I_2$. We denote the natural map from
$\Vac$ to $\Vac/\MVac = \ResKac$ by $x\mapsto \ov{x}$.  We fix
$z\in\Kac$ such that $v(z)=\beta$. The field $\ResKac$ is infinite
since it is algebraically closed; thus we can choose $x\in\Kac$ such
that
%
%
$$\bigwedge_{i\in I_2}                       
\quad \ov{\left({x-a_1\over z}\right)} \neq
\ov{\left({a_i-a_1\over z}\right)}\>.$$
This $x$ verifies $v(x-a_i)=\beta$, for all $i\in I_2\,$.
This concludes the proof.
\end{proof}
\begin{remark}
\rm  We can give a geometric description of the set
$$S=\{x\in\Kac \mathrel{:} \bigwedge_{i=1,\dots,n} v(x-a_i)=\beta_i
\}\>.$$
   We use the notations of the proof. Set $C_\beta(a) =
   \{x\mathrel{:} v(x-a)=\beta\}$. We have
$$ S= \bigcap_{i=1,\dots,n} C_{\beta_i}(a_i) = \bigcap_{i\in I_1}
C_{\beta}(a_i)\>. $$

   If $\beta=\infty$, $S$ is reduced to one element in $K$. Now suppose
   $\beta<\infty$. If $\varepsilon_{ij}>\beta$ for all $i,j\in I_1$,
   then $S=C_{\beta_i}(a_i)$ for all $i\in I_1$. If for some $i,j\in
   I_1$, $\varepsilon_{ij}=\beta$, take $I_2$ as in the proof. We have
   $S=\bigcap_{i\in I_2} C_{\beta}(a_i)$.
   Suppose that $1\in I_2$. The set $C_{\beta}(a_1)$ is an infinite
   disjoint union of open disks $B^\circ_\beta(\zeta) =
   \{x\mathrel{:}v(x-\zeta)>\beta\}$, where $v(\zeta-a_1)=\beta$.
   There is a bijection between the disks $B^\circ_\beta(\zeta)$ and
   the residue field of $\Kac$, given by
$$B^\circ_\beta(\zeta) \mapsto f(\zeta)={\rm res}
                 \left({\zeta-a_1\over  z}\right)\>.
$$
   We have the following equality:
  $$S=\bigcap_{i\in I_2} C_{\beta}(a_i) = \bigcup_{\textstyle{
  v(\zeta-a_1)=\beta\atop \forall i\in I_2\setminus\{1\}\;f(\zeta)\neq
  f(a_i)} } B^\circ_\beta(\zeta)\>.$$
   This union is nonempty because there are infinitely many values
   possible for $f(\zeta)$, but only finitely many for $f(a_i)$.
\end{remark}
\begin{remark} \label{elim-q-lineaire}
\rm\parindent0mm
Another formulation of lemma \ref{ultra-graph} is that we have a quantifier
elimination for {\it linear formulas} in {\bf ACVF}$(L)$: given a
formula
$$\exists_F\, x\ \bigwedge_i v(x-x_i)=\beta_i$$
we put $\varepsilon_{ij}=v(x_i-x_j)$, and the above
formula is equivalent to
$$\bigwedge_{i<j} T(\varepsilon_{ij},\beta_i,\beta_j)\>.$$
%
\end{remark}

An easy consequence is the following lemma:

\begin{lemma} \label{Ultra-graph2}
Take any complete ultrametric graph of vertices
$p_1,\dots,p_n$, with edges labeled by
$\varepsilon_{ij}\in\Gamma\cup\{\infty\}$, and elements $x_1,\dots,x_l
\in\Kac$ (with $l<n$), such that $v(x_i-x_j)=\varepsilon_{ij}$ for all
$i,j\leq l$. Then there exist $x_{l+1},\dots,x_n\in \Kac$ such that
$v(x_i-x_j)=\varepsilon_{ij}$ for all $i,j$.
\end{lemma}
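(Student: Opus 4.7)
The plan is to argue by induction on $n-l$, the number of vertices we still need to realize, with the single-vertex extension step handled directly by the preceding Lemma \ref{ultra-graph}. The base case $n-l=0$ is vacuous, so the whole content is in the inductive step, and there we only need to add one point $x_{l+1} \in \Kac$ satisfying $v(x_{l+1}-x_i) = \varepsilon_{i,l+1}$ for $i=1,\dots,l$; after that, the $l+1$ given points together with the remaining $n-(l+1)$ vertices form a strictly smaller instance of the same problem, to which the induction hypothesis applies.

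To perform the one-point extension, I would apply Lemma \ref{ultra-graph} with the $a_i$ being the already-constructed $x_1,\dots,x_l$ and $\beta_i := \varepsilon_{i,l+1}$. That lemma's hypothesis is that the residue field is infinite, which holds here because $\ResKac$ is algebraically closed. Its conclusion is that $\exists_F\, x\, \bigwedge_{i\le l} v(x-x_i)=\beta_i$ is equivalent to the assertion that the complete graph on $x_1,\dots,x_l$ together with the new vertex (carrying edge-labels $\varepsilon_{i,l+1}$ to $x_i$ and labels $v(x_i-x_j)=\varepsilon_{ij}$ among the $x_i$'s) is ultrametric. Every such triangle either has all three vertices among $p_1,\dots,p_l$ — then it is ultrametric by the induction hypothesis on the data $v(x_i-x_j)=\varepsilon_{ij}$ — or it involves $p_{l+1}$ and two of the $p_i$, in which case the triangle inequality $T(\varepsilon_{ij},\varepsilon_{i,l+1},\varepsilon_{j,l+1})$ holds because the original graph on $p_1,\dots,p_n$ was assumed ultrametric. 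Hence the hypothesis of Lemma \ref{ultra-graph} is met and the desired $x_{l+1}$ exists.

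There is really no serious obstacle: the result is a straightforward iteration of the previous lemma. The only thing to be a bit careful about is the logical structure of the induction: one must make sure that after producing $x_{l+1}$, the reduced instance still satisfies the hypotheses of the lemma, i.e.\ that the induced labeled graph on $p_1,\dots,p_{l+1},\dots,p_n$ is still ultrametric (it is, as we have only added a vertex that was already in the ultrametric graph) and that the already-realized distances $v(x_i-x_j)=\varepsilon_{ij}$ now hold for all $i,j\le l+1$ (which is precisely what Lemma \ref{ultra-graph} guarantees). With those two verifications the induction closes immediately.
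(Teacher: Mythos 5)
The paper states Lemma \ref{Ultra-graph2} as ``an easy consequence'' of Lemma \ref{ultra-graph} without giving a proof; your iteration-by-induction on $n-l$, adding one vertex at a time via Lemma \ref{ultra-graph}, is precisely the argument the authors clearly have in mind, and it is correct. One minor point of phrasing: when you justify that triangles entirely among $p_1,\dots,p_l$ are ultrametric, the reason is not an ``induction hypothesis'' but simply that $\varepsilon_{ij}=v(x_i-x_j)$ are genuine ultrametric distances in $\Kac$ (and indeed Lemma \ref{ultra-graph} already builds this in, reducing the condition to $\bigwedge_{i<j}T_{ij}$); the triangles involving $p_{l+1}$ are handled, as you say, by the ultrametricity of the ambient graph on $p_1,\dots,p_n$.
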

%
%
\subsection{Univariate existential decision procedure}\label{subsecUEDP}
We are going to prove that existential problems in a single variable
$x$ can be solved in
$(\Kac,\Vac)$.
\begin{definition} We define {\em univariate $F$-conditions} by
\begin{itemize}
   \item[\bf (i)] For any $P(X)\in\K[X]$, the condition
   $\Phi(x)\>:\Leftrightarrow\>P(x)=0$ is a univariate $F$-condition.
   \item[\bf (ii)] Take any $\gamma,\delta\in\GK$, $q,r\in
   \mathbb{Q}^{>0}$, and any $P(X),Q(X)\in\K[X]$. The condition
   $\Phi(x)\>:\Leftrightarrow\>
   v(P(x))+ q\cdot \gamma \mathrel{\Box} v(Q(x)) +
   r\cdot \delta$, where
   $\Box$ is either $=$ or $<$, is a univariate $F$-condition.
   \item[\bf (iii)] Take any $P(X)\in\K[X]$. The condition
   $\Phi(x)\>:\Leftrightarrow\>
   v(P(x)) < \infty$ is a univariate $F$-condition.
   \item[\bf (iv)] If $\Phi(x),\Psi(x)$ are univariate $F$-conditions,
   then $\Phi(x)\wedge\Psi(x)$ and $\Phi(x)\vee\Psi(x)$ are univariate
   $F$-conditions.
\end{itemize}
Conditions of the form {\bf (i)}, {\bf (ii)} and {\bf (iii)} are
called {\em atomic $F$-conditions}.
\end{definition}
\begin{definition} We define {\em $\Gamma$-conditions} by
\begin{itemize}
   \item[\bf (i)] For any $\delta\in\GK\cup\{\infty\}$,
   $q_1,\dots,q_n\in\mathbb{Q}^{>0}$, $r\in \{1,n \}$, the condition\\
   $\Phi(\bar a)\>:\Leftrightarrow\>
   q_1\cdot a_1+\cdots+q_r\cdot a_r \mathrel{\Box}
    q_{r+1}\cdot a_{r+1}+\cdots+q_n\cdot a_n + \delta$,\\
   where
   $\Box$ is either $=$, $>$ or $<$, is a $\Gamma$-condition on $\bar a$.
   \item[\bf(ii)] If $\Phi(\bar a),\Psi(\bar a)$ are
   $\Gamma$-conditions
   on $\bar a$, then so are $\Phi(\bar a)\wedge\Psi(\bar a)$
   and $\Phi(\bar a)\vee\Psi(\bar a)$.
\end{itemize}
Conditions of the form {\bf (i)} are called {\em atomic
$\Gamma$-conditions}.
\end{definition}
It is well known that such conditions are equivalent
to some condition of the following form, which is by definition
a \textit{disjunctive normal form}:
$$\bigvee_{i=1}^n \bigwedge_{j=1}^{m_i} \Phi_{ij}\>,$$
where the $\Phi_{ij}$ are atomic conditions. Moreover, given any
univariate condition $\Phi(x)$, there is an algorithm which computes
a disjunctive normal form for $\Phi(x)$.

We say that $\xi\in\Kac$ satisfies a univariate $F$-condition $\Phi(x)$
if $\Phi(\xi)$ holds in $\Kac$, and that
$\alpha_1,\dots,\alpha_n\in\GKac$ satisfy a $\Gamma$-condition
$\Phi(\bar a)$ if $\Phi(\bar\alpha)$ holds in $\GKac$.

\par\smallskip
We recall the following result without proof. See Theorem~5.6
in~\cite{CLR} or Corollary~3.1.17 in~\cite{Ma}.

\begin{proposition}[Existential Decision Procedure in DOAG${}_\infty$]
\mbox{ }\par\noindent
Let $\Phi(\bar a)$ be a $\Gamma$-con\-di\-tion. Then there is an algorithm
to decide whether there are some $\alpha_1,\dots,\alpha_n\in\GKac$
satisfying $\Phi(\bar a)$ or not. If the answer is yes, the algorithm
provides such a n-tuple. We call it a {\em witness} of the condition.
\end{proposition}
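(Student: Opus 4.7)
The plan is to reduce the decision problem to ordinary linear arithmetic over~$\mathbb{Q}$ inside the divisible ordered abelian group $\GKac = \GKd$, and to solve it by (constructive) Fourier--Motzkin elimination, which simultaneously decides satisfiability and produces an explicit witness. The first step is to put $\Phi(\bar a)$ in disjunctive normal form $\bigvee_{i=1}^{N}\Psi_i(\bar a)$, each $\Psi_i$ being a conjunction of atomic $\Gamma$-conditions; since $\Phi$ is satisfiable iff some $\Psi_i$ is, and any witness of $\Psi_i$ witnesses $\Phi$, it suffices to treat one conjunction at a time and iterate.

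Next I would dispose of $\infty$ by a finite case split. For each subset $I\subseteq\{1,\dots,n\}$, test the partial assignment $a_i=\infty$ for $i\in I$ and $a_j\in\GKd$ (finite) for $j\notin I$. Using the conventions on $\infty$ built into the $\Gamma$-sort, each atom of $\Psi_i$ becomes, after evaluation, either vacuously true, vacuously false (kill that branch), or an honest $\mathbb{Q}$-linear equality or strict inequality among the surviving unknowns $(a_j)_{j\notin I}$ and constants in $\GK$. Enumerating the $2^n$ choices of $I$ reduces the problem to deciding a finite conjunction of $\mathbb{Q}$-linear equalities and strict inequalities in the ordered $\mathbb{Q}$-vector space $\GKd$ over the parameter subgroup $\GK$.

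For this residual problem I would apply Fourier--Motzkin elimination to the unknowns one by one. When eliminating $a_j$, the surviving atoms split into equations $a_j = L_k$, lower bounds $L_k \lt a_j$ (or $L_k \leq a_j$), and upper bounds $a_j \lt U_\ell$ (or $a_j\leq U_\ell$), with each $L_k,U_\ell$ a $\mathbb{Q}$-linear combination of the remaining unknowns and $\GK$-constants; the equivalent quantifier-free condition for the existence of $a_j$ is a conjunction of inequalities $L_k \mathrel{\Box} U_\ell$, and the bounds are recorded for back-substitution. After all unknowns are eliminated, the resulting formula is a boolean combination of $\mathbb{Q}$-linear inequalities between elements of $\GK$, and its truth can be tested by the comparison technique of Example~\ref{example val group} using the oracles on~$\S$. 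If it holds, back-substitution through the saved elimination steps yields a witness $\bar\alpha\in\GKd^{\,n}$: at each step one picks, using divisibility of $\GKd$ and density between distinct bounds, any element in the nonempty interval prescribed by the stored bounds.

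The main obstacle is the careful bookkeeping of the $\infty$ case split together with the convention $a-\infty=0$ (and its variants), so that each atomic condition is correctly reclassified in each branch before Fourier--Motzkin begins, the latter being valid only in ordinary linear arithmetic over an ordered $\mathbb{Q}$-vector space. Everything else is routine linear arithmetic, and the production of a witness is automatic from the constructive character of the elimination.
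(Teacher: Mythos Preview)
The paper does not actually prove this proposition: it explicitly says ``We recall the following result without proof'' and points to Theorem~5.6 of \cite{CLR} and Corollary~3.1.17 of \cite{Ma}. Your argument via disjunctive normal form, a finite case split on which unknowns equal $\infty$, and Fourier--Motzkin elimination over the ordered $\mathbb{Q}$-vector space $\GKd$ is the standard constructive route and is correct; it is essentially what one finds behind the cited references, so there is no real divergence to discuss.

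Two minor remarks that tighten your write-up. First, $\Gamma$-conditions as defined in this paper are \emph{positive} boolean combinations (only $\wedge$ and $\vee$, no negation), so the atoms carry only $=$, $<$, $>$; after using the equalities for substitution you are left with strict inequalities only, and your parenthetical ``(or $L_k\leq a_j$)'' is unnecessary. Second, your back-substitution step silently uses that $\GKd$ is nontrivial, hence dense (midpoints exist by divisibility) and unbounded in both directions; this is guaranteed because the ambient theory is that of \emph{non-trivially} valued fields, but it is worth saying. With these points made explicit, your proof is complete and the final truth test does indeed reduce to comparisons in $\GK$ handled as in Example~\ref{example val group}.
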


We now prove the following theorem:
%
\begin{theorem}[Univariate Existential Decision Procedure
in ACVF]\label{udp}
Let $\Phi(x)$ be a univariate condition. Then we have an algorithm to
decide whether there is some $\xi\in\Kac$ satisfying $\Phi(x)$ or
not. If the answer is yes, the algorithm gives a description of a
witness $\xi\in\Kac$ such that $\Phi(\xi)$ holds; the algorithm decides
whether $\xi$ is unique or not, and if this is the case then $\xi$ is
in $\Kh$.
\end{theorem}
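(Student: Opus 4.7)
The plan is to reduce $\Phi(x)$ to disjunctive normal form and treat each conjunction $\Psi$ of atomic $F$-conditions separately. Let $P_1,\ldots,P_s$ be the polynomials appearing in $\Psi$, and let $x_1,\ldots,x_N$ be the distinct roots of $P:=P_1\cdots P_s$ in $\Kac$. Running \textbf{GraphRoots} on $(P_1,\ldots,P_s)$ produces the multiplicities $m_{i,j}=\mu(P_i,x_j)$ and the ultrametric labels $\varepsilon_{ij}=v(x_i-x_j)$; a recursive application of \textbf{TriangularSimVal} to the same system with queries $P_1,\ldots,P_s$ produces all the values $v(P_i(x_j))$. Every datum needed about the $x_j$'s is then in hand.

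A witness $\xi\in\Kac$ of $\Psi$ falls into one of two cases. In Case~A, $\xi=x_j$ for some $j\le N$: for each $j$ we test $\Psi(x_j)$ directly from the data already computed, reading ``$P_i(x)=0$'' as $m_{i,j}>0$, ``$v(P_i(x))<\infty$'' as $m_{i,j}=0$, and the value (in)equalities from the known $v(P_i(x_j))$. In Case~B, $\xi$ is distinct from every $x_j$, so $\xi$ is not a root of any $P_i$; conjuncts of type (iii) are then automatic and equalities of type (i) force Case~B to have no witness. Setting $\beta_j:=v(\xi-x_j)$, one has
$$
v(P_i(\xi))\;=\;v(\mathrm{lc}(P_i))\;+\;\sum_{j=1}^{N} m_{i,j}\,\beta_j,
$$
and $\Psi$ transforms into a $\Gamma$-condition $\widetilde\Psi(\beta_1,\ldots,\beta_N)$ with computable rational coefficients and constants in $\GK\cup\{\infty\}$.

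By Lemma~\ref{ultra-graph} (and its complement Lemma~\ref{Ultra-graph2}), the distances $(\beta_j)$ are realized by some $\xi\in\Kac$ if and only if the graph augmented by $\xi$ is ultrametric, i.e., $\bigwedge_{i<j}T(\varepsilon_{ij},\beta_i,\beta_j)$ holds. Consequently Case~B reduces to the single $\Gamma$-existential problem
$$
\exists\,\beta_1,\ldots,\beta_N\;\;\widetilde\Psi(\bar\beta)\;\wedge\;\bigwedge_{i<j}T(\varepsilon_{ij},\beta_i,\beta_j),
$$
which is decidable by the Existential Decision Procedure in \textbf{DOAG}${}_\infty$. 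If solvable, this yields an explicit $(\beta_j)$ and, via Lemma~\ref{Ultra-graph2}, a description of $\xi$.

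For uniqueness, note that whenever Case~B succeeds, the construction at the end of the proof of Lemma~\ref{ultra-graph} produces infinitely many $\xi$'s realizing the same $\beta_j$'s (the residue field of $\Kac$ is infinite), so uniqueness forces every witness to lie in Case~A. If across the whole DNF of $\Phi$ exactly one $x_j$ passes the Case~A test, then the combined profile of multiplicities and of values $v(P_i(x_j))$ singles $x_j$ out among the roots of $P$, and Remark~\ref{rem Disc elem} delivers $x_j\in\Kh$. The main obstacle will be organizational: one must track how each atomic $F$-conjunct translates to a $\Gamma$-conjunct on the $\beta_j$'s (and which ones become vacuous or inconsistent in Case~B) and glue together the outputs of \textbf{GraphRoots}, \textbf{TriangularSimVal} and the \textbf{DOAG}${}_\infty$-procedure into a single decision algorithm; the genuinely new geometric content is already packaged in the Ultrametric Graph Lemma.
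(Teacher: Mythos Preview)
Your argument is correct and follows the paper's strategy: compute the root data with \textbf{GraphRoots}, write each $v(P_i(\xi))$ as $v(\mathrm{lc}(P_i))+\sum_j m_{i,j}\beta_j$ with $\beta_j=v(\xi-x_j)$, eliminate $\xi$ via the Ultrametric Graph Lemma, and hand the remaining problem to the \textbf{DOAG}$_\infty$ decision procedure. The only cosmetic difference is that you split on whether the witness coincides with a root (Case~A/B) while the paper splits on whether the conjunction contains an atom $P(x)=0$; the paper's own remark after its proof observes that its first case can be absorbed into the second, which is essentially your unified treatment.
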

%
%
\begin{proof}{Proof} We give an existential decision procedure for a conjunction
$$\Phi(x)\;:\;\bigwedge_{i=1}^n \Phi_i(x)$$
where the $\Phi_i$'s are atomic conditions. It suffices to use it
several times to obtain an existential decision procedure for a univariate
condition put in a disjunctive normal form, and hence for every
univariate condition.

\noindent$\bullet$\enspace{\bf First case:} One of the $\Phi_i(x)$
(let's say $\Phi_1(x)$) is of the form $P(x)=0$. Let $k=\deg P$, and
$\xi_1,\dots, \xi_k$ be
the roots of $P$. Let $Q_1(x),\dots,Q_r(x)\in
K[x]$ be the polynomials appearing in the other
$\Phi_i(x)$'s. We can
use {\bf SimVal} to obtain the multiset of $(r+1)$-tuples of values
$[(v(\xi_i),v(Q_1(\xi_i)),\dots,v(Q_r(\xi_i)))]_{i=1,\dots,k}$.

It suffices now to check, for each $(\nu,\nu_1,\dots,\nu_r)$ in this
list, whether the conditions $\Phi_1,\dots,\Phi_n$ are verified:
\begin{itemize}
\item for a $\Phi_k$ of the form $Q_i(x)=0$, test whether $\nu_i=\infty$,
\item for a $\Phi_k$ of the form
$ v(Q_i(x)) + q\cdot\gamma \mathrel{\Box} v(Q_j(x))+ r\cdot \delta$,
test whether
$\nu_i + q\cdot\gamma \mathrel{\Box} \nu_{j}+r\cdot \delta$
(where $\Box$ is either $=$, $>$ or $<$).
\item for a $\Phi_k$ of the form $v(Q_i(x)) < \infty$, test whether
$\nu_i < \infty$.
\end{itemize}
If there are no $(r+1)$-tuples in this multiset such that these
conditions are verified, then there is no $\xi\in\Kac$ satisfying
$\Phi(x)$; if there are $m\leq k$ of these multisets satisfying these
conditions, we know that $m$ of the roots of $P$ can be chosen for
$\xi$.

If $m=1$, then remark~\ref{rem Disc elem} shows that the corresponding
root of $P$ is in $\Kh$.

\noindent$\bullet$\enspace{\bf Second case:} Assume now that there is
no condition $\Phi_i(x)$ of the form $P(x)=0$ among the $\Phi_i(x)$.
For each $i$, let $P_i(x)$ and $Q_i(x)$ be the polynomials appearing
in atomic formulas $\Phi_i\,:\, v(P_i(x)) + q_i\cdot\gamma_i
\mathrel{\Box_i} v(Q_i(x)) + r_i\cdot \delta_i$ (where $\Box_i$ is
either $=$, $>$ or $<$), and $\Phi_i\;:\, v(P_i(x)) <
\infty$ (in that case, set $Q_i=1$, $q_i=r_1=1$, $\gamma_i=0$ and
$\delta_i=\infty$ for the sequel).


We construct the following formulas:
$$\displaylines{\Phi'(x,\bar c,\bar d) \,:\,
\left(\bigwedge_{i=1}^n v(P_i(x))= c_i \wedge
v(Q_i(x))= d_i \right)\cr
\Phi''(\bar c,\bar d)\,:\, \left(\bigwedge_{i=1}^n
  c_i+ q_i\cdot\gamma_i\mathrel{\Box_i} d_i + r_i\cdot
\delta_i\right)\>.\cr}$$
The variables $\bar c= c_1,\dots, c_n$ and
$\bar d= d_1,\dots, d_n$ stand for elements of
$\GKac$. We have
$$ \exists x\in\Kac\>\> \Phi(x) \equi \exists \bar c,\bar d
\in\GKac\>\exists x\in\Kac \>\> \Phi'(x,\bar c,\bar d)\wedge
\Phi''(\bar c,\bar d)\>.$$

Consider a problem of the following form:
$$\Psi(x,\bar b)\,:\,\exists x\in\Kac\>
\bigwedge_{i=1}^m v(R_i(x))= b_i \>, $$
where each $R_i(X)$ is a polynomial of $\K[x]$, and the $ b_i$'s
are indeterminates.

We introduce all the roots $r_1,\dots,r_N$ of the polynomials
$R_1,\dots,R_m$. We can compute $N$ with the algorithm {\bf
GraphRoots}, as well as the values $\varepsilon_{ij}=v(r_i-r_j)$, for
all $i,j$, and the multiplicity $\mu_{jk}$ of $r_k$ as a root of
$R_j$.  We have an equivalence
$$\Psi(x,\bar b)\equi \exists\, x\in\Kac\> \exists
   a_1\cdots a_N\in\GKac\>
  \bigwedge_{i=1}^N v(x-r_i)= a_i \wedge
  \Psi_1(\bar a,\bar b)\>,$$
where $\Psi_1$ is a conjuction of formulas of the form
$ b_j = \sum_{k} \mu_{jk}\cdot a_k$.

    From the ultrametric graph lemma we have
$$ \exists x\in\Kac\>
\bigwedge_{i=1}^N v(x-r_i)= a_i \equi\bigwedge_{i<j}
T(\varepsilon_{ij}, a_i, a_j)\>.$$
Hence we can write that $\Psi(x,\bar b)$ is equivalent to a
problem in $\GKac$\ :
$$\Psi(x,\bar b)\equi\exists\bar a\in\GKac\>\bigwedge_{i<j}
T(\varepsilon_{ij}, a_i, a_j) \wedge \Psi_1(\bar a,\bar b)\>.$$
Now we can do that for $\Psi=\Phi'$. We obtain that $\exists
x\in\Kac \> \Phi'(x,\bar c,\bar d)$ is equivalent to $\exists \bar
a\in\GKac\> \Phi'''(\bar a,\bar c,\bar d)$, where $\Phi'''(\bar a,\bar
c,\bar d)$ is a $\Gamma$-condition. We have proved
$$ \exists x\in\Kac\>\Phi(x) \equi \exists \bar a,\bar c, \bar d
\in\GKac \>\Phi'''(\bar a,\bar c,\bar d) \wedge \Phi''(\bar c, \bar
d)\>.$$
We can apply the existential decision procedure for {\bf
DOAG}\nobreak${}_\infty$ to this formula. If there is no solution,
then there is no $\xi\in\Kac$ satisfying $\Phi(x)$. If there is a
solution, we can use it together with lemma~\ref{ultra-graph} to
describe an element $\xi\in\Kac$ satisfying $\Phi(x)$. Of course,
there is no unicity in that case.
\end{proof}
\begin{remark}
{\rm The first case of our proof can in fact be treated as a
particular case of the second, replacing $P(x)=0$ by $v(P(x))=\infty$:
in that case the existential decision procedure in
{\bf DOAG}\nobreak${}_\infty$
will give $a_i=\infty$ for some $i$, and then $v(x -r_i)=\infty$
implies $\xi = r_i$. However, the proof is clearer with this
distinction. Moreover, it would be less easy to show that in the case
of unicity, the witness is in $\Kh$.}
\end{remark}

\subsection{Quantifier Elimination}\label{subsecQEL}
Quantifier elimination algorithms very often come from existential
decision procedures in the one variable case. If such a decision
procedure is ``uniform'' it can be performed ``with parameters''. This
gives a good way for eliminating the quantifier in a formula with only
one existential quantifier.  For the real algebraic case see,
e.g., \cite{BCR} chapter 1.  In the present section, we will treat
the case of algebraically closed valued fields.
%
%
\begin{definition} Take $n\in\mathbb{N}$, and denote by $\bar y$ an
n-tuple $(y_1,\dots,y_n)$ of $F$-variables. Let $C_1(\bar y),
\dots,C_m(\bar{y})$ be atomic $L$-formulas with $y_1,\dots,y_n$ as
the only free variables.\\
   \noindent{\bf 1.} We say that $\bigvee_i C_i(\bar y)$ is a {\em finite
   exclusive disjunction} if
$$ \forall_F \bar{y}\quad \bigvee_{i=1}^m C_i(\bar{y})
\>\>\wedge\>\>
\bigwedge_{i\neq j} \neg C_i(\bar{y})\vee\neg C_j(\bar{y})$$
   holds. In that case we write
$$\C_i=\left\{\bar y\in\K^n\mathrel{:} C_i(\bar y)\right\}\>.$$
   Then $K^n$ is the disjoint union of $\C_1,\dots,\C_m$. The family
   $\C_i$ is a {\em definable partition of the space $\K^n$}. Note that
   we allow that some $\C_i$ may be empty.\\
   \noindent{\bf 2.} Let $D_{ij}(\bar y)$, for $i=1,\dots,m$ and
   $j=1,\dots,\ell_i$, be atomic $L$-formulas such that $\bigvee_{ij}
   D_{ij}(\bar y)$ is a finite exclusive disjunction. We say that
   $\bigvee_{ij} D_{ij}$ is a refinement of $\bigvee_i C_i$ if for all
   $i$, we have
$$C_i(\bar y) \equi \bigvee_{j=1}^{\ell_i} D_{ij}(\bar y)\>,$$
   or, equivalently
$$\C_i = \bigcup_{j=1}^{\ell_i} \D_{ij}\>,$$
   where $\D_{ij}=\left\{\bar y\in\K^n\mathrel{:} D_{ij}(\bar
   y)\right\}$. Note that this union is a disjoint union.
\end{definition}
%
%
We denote by $\bar{Y}$ an $n$-tuple of indeterminates
$Y_1,\dots,Y_n$. The ring $\K\left[\bar{Y}\right]$ is $\K[Y_1,\dots,Y_n]$.
%
%
We can apply the algorithms given in the previous
section to polynomials with parameters. Consider
$P(\bar{Y},X)\in\K\left[\bar{Y},X\right]$ as a polynomial in $X$
with parameters $\bar{Y}$.
\begin{proposition}[Algorithms with parameters] $\;\;$ \\
{\bf 1.} The \NPA applied to $P$ provides
   \begin{itemize}
       \item[\bf (i)] a finite exclusive disjunction $\bigvee_i
       C_i(\bar y)$,
       \item[\bf (ii)] for each $i$, an integer $k_i$ and a multiset
       $[t_1(\bar{y}),\dots, t_{k_i}(\bar{y})]$, where each
       $t_j(\bar{y})$ is an $L_\Gamma$-term,
    \end{itemize}
    such that for all $\bar{y}\in\C_i$, $k_i=\deg_X P(\bar{y},X)$, and
    if $[\xi_1,\dots \xi_{k_i}]$ denotes the multiset of roots of
    $P(\bar{y},X)$, then $[t_1(\bar{y}),\dots, t_{k_i}(\bar{y})]$ is
    $[v(\xi_1),\dots,v(\xi_{k_i})]$. In other words, in each case of
    the above exclusive disjunction, the algorithm computes the values
    of the roots of $P(\bar{y},X)$.\\
    \noindent{\bf 2.} Keep the notation of the previous statement. Let
    $Q_1,\dots,Q_r\in\K\left[\bar{Y},X\right]$ be polynomials in $X$
    with parameters $\bar{Y}$. The algorithm {\bf SimVal} applied to
    $P,Q_1,\dots,Q_r$ provides
    \begin{itemize}
       \item[\bf (i)] a refinement $\bigvee_{ij} D_{ij}(\bar y)$ of
       $\bigvee_i C_i(\bar y)$,
       \item[\bf (ii)] for each case $i,j$ (with
       $j\in\{1,\dots,\ell_i\}$) a multiset of $(r+1)$-tuples of
       $L_\Gamma$-terms $\left[\left(t_s(\bar y), u_s^1(\bar
       y),\dots,u_s^r(\bar y)\right)\right]_{s=1,\dots,k_i}$,
   \end{itemize}
    such that for all $\bar y\in\D_{ij}$, if $[\xi_1,\dots \xi_{k_i}]$ is
    the multiset of roots of $P(\bar{y},X)$, then $\left[\left(t_s(\bar
    y), u_s^1(\bar y), \dots, u_s^r(\bar y) \right)
    \right]_{s=1,\dots,{k_i}}$ is $\left[\left(v(\xi_s), v(Q_1(\xi_s),
    \dots, v(Q_r(\xi_s)) \right)\right]_{s=1,\dots,{k_i}}$.\\
   \noindent{\bf 3.} Take $P_1,\dots,P_s\in\K\left[\bar Y,X\right]$. The
   algorithm {\bf GraphRoots} applied to $P_1,\dots,P_s$ provides
   \begin{itemize}
       \item[\bf (i)] a finite exclusive disjunction $\bigvee_i
       C_i(\bar y)$,
       \item[\bf (ii)] for each $i$, an integer $N_i$ and a finite
       family\\
       $\left((\mu_{jk})_{j\in[1,s],k\in[1,N_i]},
       (t_{k,\ell}(\bar y))_{1\leq j<\ell\leq N}\right)$,  where the
       $\mu_{jk}$ are integers and the $t_{k,\ell}(\bar y)$ are
       $L_\Gamma$-terms,
    \end{itemize}
     such that for all $\bar y\in\C_i$, $N_i$ is the number of roots
     of $P_1\cdot\dots\cdot P_s$, and for some ordering
     $(\xi_1,\ldots,\xi_{N_i})$ of these roots, $\mu_{jk}$ is the
     multiplicity of $\xi_k$ as a root of $P_j$, and $t_{k,\ell}(\bar
     y)$ is $v(\xi_k - \xi_\ell)$.
\end{proposition}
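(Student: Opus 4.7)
{Proof plan}
The common strategy for all three parts is to observe that the algorithms \textbf{NPA}, \textbf{SimVal} and \textbf{GraphRoots} of Section~\ref{sec Basic material} interact with $(\K,\V)$ only through a finite sequence of the two basic tests permitted by our computability assumptions: ``Is $F(\bar c)=0$?'' and ``Does $G(\bar c)$ divide $F(\bar c)$ in $\V$?''. Replace the coefficients $\bar c$ by indeterminates $\bar Y$: each test then becomes an atomic $L$-formula on $\bar y$, either equality of an element of $\K[\bar Y]$ to $0$, or a comparison between two $L_\Gamma$-terms $v(F(\bar y))$ and $v(G(\bar y))$. Branching on the two possible answers at every test turns each algorithm into a finite decision tree, whose leaves supply the required finite exclusive disjunction. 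The symbolic data produced along each branch yield the degrees, multiplicities and $L_\Gamma$-terms claimed in each statement.

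For part~1, first split on which coefficients $p_i(\bar Y)$ of $P(\bar Y, X)$ vanish at $\bar y$; this fixes $k_i = \deg_X P(\bar y, X)$ in each branch. The \pgn of the surviving coefficients is then determined by finitely many comparisons between the $v(p_i(\bar y))$, each of which is a divisibility test in $\V$ and hence an atomic $\Gamma$-condition on $\bar y$. The slope of each edge is a rational combination of finitely many $v(p_i(\bar y))$, i.e.\ an $L_\Gamma$-term, and the horizontal length of the edge supplies the multiplicity of that slope in the multiset $[t_1(\bar y),\ldots,t_{k_i}(\bar y)]$ of root values.

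For part~2, \textbf{SimVal} is an iterated application of the \NPA to $P$ and to the auxiliary polynomials $S_{m,n}(Z)$, which are characteristic polynomials of $A^m Q_j(A)^n$ where $A$ is the companion matrix of $P$; the coefficients of each $S_{m,n}$ are polynomial in $\bar Y$, so part~1 applies uniformly. The additional search for a ``good'' integer $n_j$, bounded a priori by $(d(d-1)/2)^2$, consists of finitely many equality tests between $L_\Gamma$-terms; each produces a further atomic $\Gamma$-condition, hence a further refinement of the partition. Combining every branch yields the refinement $\bigvee_{ij} D_{ij}$, together with the multiset of $(r{+}1)$-tuples of $L_\Gamma$-terms in each case.

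For part~3, \textbf{GraphRoots} first applies \textbf{BasicD5} recursively to produce pairwise coprime polynomials, then builds a parametric triangular system and calls \textbf{TriangularSimVal}, whose uniformity follows from part~2. The main obstacle is the uniformity of \textbf{BasicD5} with parameters: each pseudo-division and GCD step tests whether intermediate leading coefficients (polynomials in $\bar Y$) vanish at $\bar y$, and both the degree and the coefficient formulas of the resulting GCD depend on these answers. Branching on every such test gives, at each leaf, a fixed degree and a fixed collection of coefficient formulas for the GCD, so that the coprime decomposition $R_1,\ldots,R_r$ and the multiplicities $\mu_{jk}$ are constant within each branch. Once this is in place, the parametric \textbf{TriangularSimVal} from part~2 produces the integer $N_i$ and the $L_\Gamma$-terms $t_{k,\ell}(\bar y)=v(\xi_k-\xi_\ell)$, completing the construction.
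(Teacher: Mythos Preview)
Your proposal is correct and follows essentially the same approach as the paper: fix the degree of $P$ by a first disjunction on the vanishing of its coefficients, then enumerate all possible shapes of the Newton polygon as a refinement, and for parts~2 and~3 propagate this through the iterated calls to the \NPA inside \textbf{SimVal} and \textbf{GraphRoots}. Your treatment of part~3 is in fact more careful than the paper's own proof, which simply says that \textbf{GraphRoots} ``uses \textbf{SimVal} iteratedly'' and does not spell out, as you do, the parametric uniformity of the preliminary \textbf{BasicD5} step.
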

\begin{proof}{Proof} For the first statement, write $P(\bar Y,X) =
    q_n(\bar y)\cdot X^n + \cdots + q_0(\bar y)$.  Consider the
    exclusive disjunction
$$\Bigl(q_0(\bar y)=\ldots=q_n(\bar y)=0\Bigr)\>\vee\>\bigvee_{i=0}^n
\left(v(q_i(\bar y))<\infty\;\wedge \bigwedge_{j=i+1}^n q_{n-j}(\bar
y)=0\right)\>.$$
    In each case of this disjunction the degree in $X$ of $P(\bar y,X)$
    is fixed. We are going to refine it to obtain the desired
    disjunction. Apply the \NPA in any fixed case of this disjunction:
    its result depends naturally on a new disjunction, each case of it
    expressing a different shape for the Newton Polygon of $P$. More
    precisely, if $m>0$ is $\deg_X P(\bar y,X)$, for each $\ell\leq
    m+1$ and each $\ell$-tuple $(k_1,\dots,k_\ell)$ of non-negative
    integers such that $0=k_1<\cdots<k_\ell=m$, we can write a formula
    $C_{m,\ell,k_1,\dots,k_\ell}(\bar y)$ expressing that
    $(k_1,v(q_{k_1}(\bar y))),\dots,(k_\ell,v(q_{k_ell}(\bar y)))$ are
    the consecutive vertices of the Newton Polygon of $P$. In each
    fixed case $C_{m,\ell,k_1,\dots,k_\ell}$, the values of the roots
    are the $L_\Gamma$-terms ${1\over k_{i+1}-k_i}(v(q_{k_i}(\bar
    y))-v(q_{k_{i+1}}(\bar y)))$.

\ms {\bf Example:}
    Set $R(\bar{Y},X)=a(\bar{Y})X^2 + b(\bar{Y})X + c(\bar{Y})$; we
    omit the parameters $\bar{Y}$ in the sequel: $a$ stands for $a(\bar
    Y)$, and so on.  \begin{itemize}
    \item If $v(a)<\infty$, and $2v(b)\geq v(a) + v(c)$, then
     $\xi_1,\xi_2\in\Kac$, the roots of $R$ considered as a polynomial
     in $X$, both have value ${1\over2} (v(c) - v(a))$.
    \item If $v(a)<\infty$, and $2v(b) < v(a) + v(c)$, then there is one
    root of value $v(b) - v(a)$ and the other of value $v(c) - v(b)$.
    \item If $a=0$ and $v(b)<\infty$, then there is a single root, of
    value $v(c)-v(b)$.
    \item If $a=0$ and $b=0$ and $v(c)<\infty$, then there is no root.
    \item If $a=b=c=0$, then $\forall x\in\Kac$, $R(\bar{y},x)=0$.
\end{itemize}

    Now we turn to the second statement. The algorithm {\bf SimVal}
    applies the \NPA to $P$: this is our first disjunction. Then it
    computes some Tschirnhaus transformation of $P$. The degree of
    $P$ being fixed in each case of the disjunction, this can be done
    without refining it. The results of this computations
    are new polynomials in $K\left[\bar Y,X\right]$.  We apply the \NPA
    to each of these polynomials, after refining the disjunction. We
    obtain some lists of $L_\Gamma$-terms, from which we can construct
    the list we want, under a few conditions to eliminate ``bad
    coincidences'' (cf. \ref{comput sim val}); these conditions give
    rise to a new refinement of the disjunction.

    For the third statement, just note that {\bf GraphRoots} uses
    {\bf SimVal} iteratedly; then the result comes from the second
    statement.
\end{proof}

Now we are able to prove theorem~\ref{belette}.
\begin{proof}{Proof of theorem \ref{belette}}
   We recall that there are classical and easy arguments
   (\cite{Wei}) showing that it suffices to eliminate an
   $F$-quantifier $\exists_F\, x$ in a formula such as
   $\exists_F\, x\ \bigwedge_{k=1,\dots,n} \Phi_k(\bar{y},x)$, where
   each $\Phi_k(\bar{y},x)$ is either an atomic $F$-formula like
   $P(\bar{y},x)=0$ with $P(\bar y,x)\in\mathbb{Z}[\bar y,x]$, or an
   atomic $\Gamma$-formula. Note that an atomic $F$-formula
   $P(\bar y,x)\not=0$ can be replaced by the $\Gamma$-formula
   $v(P(\bar y,x))<\infty$.
   So we are done if we prove the following proposition.
\end{proof}
\begin{proposition} \label{propbelette}
   There is an algorithmic procedure that computes, from a formula
   $\exists_F\,x\ \bigwedge_{k=1,\dots,n} \Phi_k(\bar{y},x)$ (where
   each $\Phi_k(\bar{y},x)$ is either an atomic $F$-formula like
   $P(\bar{y},x)=0$ with $P(\bar y,x)\in\mathbb{Z}[\bar y,x]$, or an
   atomic $\Gamma$-formula), an equivalent quantifier free formula
   $\Psi(\bar{y})$.
\end{proposition}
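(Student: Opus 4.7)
The plan is to parametrize the univariate existential decision procedure of Theorem~\ref{udp}, using the parametric versions of \textbf{SimVal} and \textbf{GraphRoots} given by the preceding proposition. The output $\Psi(\bar y)$ will be a finite disjunction indexed by the cells of a definable partition of $\K^n$; in each cell, the existence of a suitable $x$ will be captured by a $\Gamma$-condition whose parameters are $L_\Gamma$-terms in $\bar y$ (of the form $v(F(\bar y))$ for $F\in\Z[\bar Y]$), and a final application of quantifier elimination for \textbf{DOAG}${}_\infty$ will purge the auxiliary $\Gamma$-variables.

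First I would rewrite any $\Phi_k$ of the form $P\neq 0$ as the $\Gamma$-atomic formula $v(P)<\infty$, so that the $F$-atomic conditions remaining are only equalities $P(\bar y,x)=0$. The proof then splits into the two cases of Theorem~\ref{udp}. In the first case, where some $\Phi_k$ is $P(\bar y,x)=0$, I apply parametric \textbf{SimVal} to $P$ together with all polynomials $Q_i$ appearing in the other $\Phi_k$'s: this produces a finite exclusive disjunction $\bigvee_j D_j(\bar y)$ and, on each cell $\D_j$, a multiset of $(r+1)$-tuples of $L_\Gamma$-terms giving $[v(\xi),v(Q_1(\xi)),\dots,v(Q_r(\xi))]$ as $\xi$ ranges over the roots of $P(\bar y,X)$. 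The existence of $x$ becomes, on $\D_j$, a finite disjunction (over the multiset) of the $\Gamma$-condition obtained by substituting these terms into $\Phi$, which is already quantifier-free in $\bar y$.

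In the second case, no equation is present. I introduce fresh $\Gamma$-variables $\bar c,\bar d$ for $v(P_k(\bar y,x)),v(Q_k(\bar y,x))$ and split $\exists_F x\,\bigwedge_k \Phi_k$ as
\[
\exists\,\bar c,\bar d\;\;\Phi''(\bar c,\bar d)\;\wedge\;\exists_F x\;\Phi'(\bar y,x,\bar c,\bar d),
\]
exactly as in the proof of Theorem~\ref{udp}. I then apply parametric \textbf{GraphRoots} to the list of all $P_k,Q_k$ (viewed as polynomials in $x$ with parameters $\bar y$), obtaining, on each cell $\C_i$, an integer $N_i$, multiplicities $\mu_{jk}$ and $L_\Gamma$-terms $\varepsilon_{jl}(\bar y)$ for the ultrametric graph. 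Introducing $\Gamma$-variables $a_1,\dots,a_{N_i}$ and invoking Lemma~\ref{ultra-graph}, the inner $\exists_F x$ becomes a quantifier-free $\Gamma$-condition $\bigwedge_{j<l} T(\varepsilon_{jl},a_j,a_l)$ together with linear equations $c_k=\sum_l \mu_{kl}\cdot a_l$, $d_k=\sum_l \mu'_{kl}\cdot a_l$. The whole existential statement on $\C_i$ is therefore a $\Gamma$-existential formula in $\bar a,\bar c,\bar d$, whose only parameters are the $L_\Gamma$-terms $\varepsilon_{jl}(\bar y)$ and the constants occurring in $\Phi''$; applying quantifier elimination in \textbf{DOAG}${}_\infty$ yields a quantifier-free $\Gamma$-formula in $\bar y$.

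The main obstacle is uniformity: the algorithms of Section~\ref{sec alg clos} naturally compute case-by-case, and one must verify that the cases can be assembled into a single definable partition of $\K^n$. This is exactly what the preceding ``Algorithms with parameters'' proposition provides, and chaining \textbf{SimVal}, \textbf{GraphRoots} and then the \textbf{DOAG}${}_\infty$-elimination requires only taking a common refinement of the partitions produced at each step. Once the uniform output is in hand, $\Psi(\bar y)$ is the disjunction, over the cells of the common refinement, of the quantifier-free conditions produced above; this is the required equivalent of $\exists_F x\,\bigwedge_k \Phi_k(\bar y,x)$.
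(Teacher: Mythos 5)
Your proposal is correct and follows essentially the same route as the paper: parametrize Theorem~\ref{udp} via the ``Algorithms with parameters'' proposition (parametric \textbf{SimVal} and \textbf{GraphRoots} over a definable partition of $\K^n$), handle the two cases exactly as in the proof of Theorem~\ref{udp}, and assemble the result as a disjunction over cells. You are slightly more careful than the paper's own very terse proof in one respect: the paper leaves the resulting formula with possible residual $\Gamma$-quantifiers in the second case and relies on the earlier general remark that \textbf{DOAG}${}_\infty$-elimination handles these, whereas you explicitly apply the \textbf{DOAG}${}_\infty$ elimination to discharge the auxiliary $\Gamma$-variables $\bar a,\bar c,\bar d$ and obtain a genuinely quantifier-free $\Psi(\bar y)$; this is a welcome clarification rather than a different argument.
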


A geometric form of this proposition is the following
(for the real algebraic case see, e.g., theorem 2.2.1 of \cite{BCR}).
Let $\K$ be a subfield of $\L$. A {\em basic $v$-constructible set
defined over $\K$} in $\L^n$ is a set of the form $\left\{\bar{x}\in
\L^n\mathrel{:}\Phi(\bar{x}) \right\}$ where $\Phi(\bar{x})$ is either
an atomic $F$-formula like $P(\bar{x})=0$ with $P(\bar x)\in \K[\bar
x]$, or an atomic $\Gamma$-formula (which is built by using only
constants in $\K$ and $v(\K)$). A {\em $v$-constructible set
defined over $\K$} in $\L^n$ is any boolean combination of basic
$v$-constructible sets defined over $\K$.
\begin{proposition} \label{belettegeometric}
   Let $\L$ be an algebraically closed valued field, and $\K$ a subfield.
   Then the image $\pi(S)$ of a $v$-constructible set $S$ defined over
   $\K$ under the canonical projection from $\L^n$ onto $\L^{n-1}$ is
   again a $v$-constructible set defined over $\K$.
   Moreover, there is an algorithmic procedure that uses only
   computations inside $\K$ to get a description of $\pi(S)$ from a
   description of $S$.
\end{proposition}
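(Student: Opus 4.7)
The plan is to derive Proposition \ref{belettegeometric} as the geometric reformulation of Proposition \ref{propbelette}, and to prove Proposition \ref{propbelette} by lifting the univariate existential decision procedure of Theorem \ref{udp} to the parametrized setting, using the ``Algorithms with parameters'' proposition. For the reduction, any $v$-constructible $S\subset\L^n$ over $\K$ is put in disjunctive normal form $S=\bigcup_j S_j$, where each $S_j=\{\bar z:\Phi_j(\bar z)\}$ is defined by a conjunction of atomic $F$- and $\Gamma$-formulas over $\K$ (using that $\neg(P(\bar z)=0)$ rewrites as the $\Gamma$-atomic $v(P(\bar z))<\infty$). Since $\pi(S)=\bigcup_j\pi(S_j)$, Proposition \ref{propbelette} applied to each $\Phi_j(\bar y,x)$ then yields a quantifier-free description of $\pi(S)$, computed from the description of $S$ using only operations in $\K$.

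To prove Proposition \ref{propbelette}, I collect all univariate polynomials in $x$ appearing in the $\Phi_k(\bar y,x)$'s (either as an $F$-equation $P(\bar y,x)=0$ or inside some $v(\cdot)$). Running the parametrized \textbf{GraphRoots} produces a finite exclusive disjunction $\bigvee_i C_i(\bar y)$ such that on each cell $\C_i$ the number of distinct roots, their multiplicities, and the edge labels $\varepsilon_{k\ell}(\bar y)=v(\xi_k-\xi_\ell)$ of the ultrametric graph of roots are given by fixed $L_\Gamma$-terms in $\bar y$; a further run of the parametrized \textbf{SimVal} then refines this disjunction and expresses the values $v(P_k(\bar y,\xi_\ell))$ by further $L_\Gamma$-terms. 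Within each refined cell, the argument of Theorem \ref{udp} goes through verbatim: introducing $\Gamma$-variables $\bar a$ for the $v(x-\xi_k)$'s and $\bar c,\bar d$ for the values $v(P_k(\bar y,x))$, $v(Q_k(\bar y,x))$, Lemma \ref{ultra-graph} rewrites $\exists_F x\ \bigwedge_k\Phi_k(\bar y,x)$ as a $\Gamma$-formula $\exists\bar a\,\bar c\,\bar d\ \Phi'''_i(\bar a,\bar c,\bar d;\bar y)$, and the effective quantifier elimination for \textbf{DOAG}${}_\infty$ converts it into a quantifier-free $\Gamma$-formula $\Psi_i(\bar y)$. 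The final output is
\[
\Psi(\bar y)\;=\;\bigvee_i\bigl(C_i(\bar y)\wedge\Psi_i(\bar y)\bigr),
\]
a quantifier-free $L$-formula over $\K$ equivalent to the original existential statement.

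The main obstacle is uniformity. Every test made inside \textbf{BasicD5}, the Newton Polygon Algorithm, \textbf{SimVal}, \textbf{GraphRoots}, and the proof of Theorem \ref{udp} --- is this coefficient zero, is this GCD trivial, which slopes coincide in the Newton polygon of $P(\bar y,X)$, which integer $n_1$ is ``good'' in \textbf{SimVal}, and so on --- can no longer be executed, but must be turned into a parameter-case distinction, branching the disjunction $\bigvee_i C_i(\bar y)$ into finitely many subcases defined by atomic $F$- or $\Gamma$-formulas in $\bar y$. That this branching is finite and systematic is exactly the content of the ``Algorithms with parameters'' proposition that has already been verified; granted this, the only work remaining is the bookkeeping to check that no test used in the proof of Theorem \ref{udp} falls outside the scope of these parametrized routines. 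Beyond this, no new mathematical idea is required: the $\Gamma$-quantifier elimination for \textbf{DOAG}${}_\infty$ and the Ultrametric Graph Lemma already supply the essential content.
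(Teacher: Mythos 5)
Your proposal follows the paper's argument essentially verbatim: the paper likewise derives the geometric form from Proposition \ref{propbelette}, and proves the latter by running the univariate decision procedure of Theorem \ref{udp} with parameters via the parametrized \textbf{SimVal} and \textbf{GraphRoots}, obtaining an exclusive disjunction $\bigvee_i C_i(\bar y)$ and cell-wise $\Gamma$-formulas $\Psi_i(\bar y)$ which are then closed by the ultrametric graph lemma and the quantifier elimination for \textbf{DOAG}${}_\infty$. Your added remarks on the DNF reduction and on turning the internal tests of the algorithms into parameter-case distinctions are correct and merely make explicit what the paper relies on through its ``Algorithms with parameters'' proposition.
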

\begin{proof}{Proof of proposition \ref{propbelette}}

We can apply our univariate decision procedure (theorem~\ref{udp}) with
parameters in order to eliminate $x$. This procedure uses {\bf SimVal}
and {\bf GraphRoots} with parameters: it will provide an exclusive
disjunction $\bigvee_i C_i(\bar{y})$, and in each case of this
exclusive disjunction, a formula $\Psi_i(\bar y)$ without
$F$-quantifiers (but perhaps with some new $\Gamma$-quantifiers if for
$\bar y\in\C_i$ we are in the second case of the proof of~\ref{udp})
such that
$$\forall \bar y \in \C_i,\> \exists_F\, x\bigwedge_{k=1,\dots,n}
\Phi_k(\bar{y},x) \equi \Psi_i(\bar y)\>.$$
Thus we have
$$ \exists_F\, x\bigwedge_{k=1,\dots,n} \Phi_k(\bar{y},x) \equi
\bigvee_i C_i(\bar y) \wedge \Psi_i(\bar y)\>.$$
This concludes the proof.
\end{proof}

\begin{remark}
{\rm
The strategy used in \cite{Wei} was first to give an
elimination for linear formulas, and then a procedure which decreases
the degrees of polynomials. There was no geometric idea at first sight,
although there may be a geometric content hidden in the proof. We
believe that the two procedures are in fact different.}
\end{remark}

When we use this quantifier elimination with the theory
{\bf ACVF}$(\K,\V)$  we get as a particular case
a decision
procedure for a closed formula with coefficients in a valued field
$\K$ given as in the introduction.
\begin{theorem} Take a formula
$$
\Theta(\bar y)\;:\;
Q^1_F x_1\dots Q^n_F x_n\ \Phi(\bar \alpha,\bar y,\bar x)
$$
where each $Q^i_F$ is $\forall_F$ or $\exists_F$
and $\bar \alpha=\alpha_1,\dots,\alpha_m$ are elements of $\K$.
We have an algorithm for computing a quantifier free formula $\Psi(\bar y)$
equivalent  to $\Theta(\bar y)$.
As a particular case, when $\bar y$ is the empty sequence, we can
decide whether the formula $\Theta(\bar y)$ is true in $\Kac$ or not.
Moreover, if the
formula is purely existential, i.e., $Q^1_F,\dots, Q^n_F$ are
existential quantifiers $\exists_F$, then the algorithm provides a
witness $\bar{\xi}\in(\Kac)^n$ such that $\Phi(\bar \xi)$ is true. If we
have a result of unicity such as
$$\forall_F\bar{x},\bar{y}\ (\Phi(\bar\alpha,\bar{x})
\wedge \left(\Phi(\bar\alpha,\bar{y}) \impl\bar{x}=\bar{y}\right)\>,$$
then this witness is in $(\Kh)^n$.
\end{theorem}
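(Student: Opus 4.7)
The plan is to reduce the theorem to Proposition~\ref{propbelette} combined with the quantifier elimination for \textbf{DOAG}$_\infty$. First, rewrite every $\forall_F$ as $\neg\exists_F\neg$ and then iteratively eliminate the $F$-quantifiers from the innermost outwards. Proposition~\ref{propbelette} handles the elimination of a single $\exists_F$ in front of a conjunction of atomic formulas; to apply it to a general body, first put it in disjunctive normal form and distribute $\exists_F$ over the disjunction. The output of Proposition~\ref{propbelette} may contain new $\Gamma$-quantifiers (introduced by the second case of the proof of Theorem~\ref{udp}); eliminate them using quantifier elimination for \textbf{DOAG}$_\infty$. Iterating over all $F$-quantifiers produces the quantifier-free formula $\Psi(\bar y)$.

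When $\bar y$ is empty, $\Psi$ is a closed quantifier-free $L$-formula whose atomic subformulas are either $F$-equalities $P(\bar\alpha)=0$ with $P\in\Z[C_1,\ldots,C_m]$, or $\Gamma$-comparisons between $L_\Gamma$-terms built from values $v(P(\bar\alpha))$ and constants from $v(\K)$. By our computability assumptions on $\S$, the equality test $P(\bar\alpha)=0$ is explicit; by the manipulations described in Example~\ref{example val group}, any $\Gamma$-comparison between such terms reduces to a finite sequence of the two allowed tests (equality in $\S$ and divisibility in $\V$). Hence the truth value of $\Psi$ can be computed, which gives the decision procedure for closed formulas.

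For the witness construction in the purely existential case $\exists_F x_1\cdots\exists_F x_n\,\Phi(\bar\alpha,\bar x)$, proceed backwards. After producing the closed quantifier-free statement $\Psi$ and checking that it is true, apply Theorem~\ref{udp} to the univariate formula $\Psi_1(x_1)$ obtained by eliminating $x_2,\ldots,x_n$ only: this is a univariate condition in $x_1$ with coefficients in $\K$, so Theorem~\ref{udp} supplies a witness $\xi_1\in\Kac$ together with its description. Then recurse on $\exists_F x_2\cdots\exists_F x_n\,\Phi(\bar\alpha,\xi_1,x_2,\ldots,x_n)$, working over the field $\K(\xi_1)$ whose computability is provided by the dynamic machinery of Section~\ref{sec alg clos}. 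Continuing this way yields $(\xi_1,\ldots,\xi_n)\in(\Kac)^n$ with $\Phi(\bar\alpha,\bar\xi)$ true.

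For the unicity statement, note that if the global solution $\bar\xi$ is unique then at each recursive step the partial choice $\xi_i$ is also forced uniquely by the remaining existential formula over $\K(\xi_1,\ldots,\xi_{i-1})$. Theorem~\ref{udp} then guarantees $\xi_i$ lies in the henselization of that intermediate field; since the henselization of (a subfield of) $\Kh$ inside $\Kac$ is contained in $\Kh$, one gets $\bar\xi\in(\Kh)^n$. The main obstacle is verifying that the iterated applications of Proposition~\ref{propbelette} and of the dynamic algorithms preserve the required computability: one has to check that the divisibility/equality oracles available over $\K$ extend uniformly to the finite extensions $\K(\xi_1),\K(\xi_1,\xi_2),\ldots$ generated along the recursion, which is precisely the content of the dynamic evaluation framework of Section~\ref{sec alg clos} applied to the triangular systems constructed by the parametric versions of \textbf{SimVal} and \textbf{GraphRoots}.
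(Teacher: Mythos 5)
Your proposal follows essentially the same route as the paper's own proof: replace $\bar\alpha$ by indeterminates, apply the parametric quantifier elimination of Proposition~\ref{propbelette} (cleaning up residual $\Gamma$-quantifiers with quantifier elimination for \textbf{DOAG}$_\infty$) to obtain $\Psi$, decide the closed instance $\Psi(\bar\alpha)$ by the base computability assumptions, and in the purely existential case peel off one $F$-quantifier at a time, using Theorem~\ref{udp} to produce each $\xi_i$ and the dynamic machinery of Section~\ref{sec alg clos} to compute over $\K(\xi_1,\dots,\xi_{i-1})$. The one place where you add something the paper leaves implicit is the unicity step: the paper just says ``repeating this argument $n$ times'' gives $\bar\xi\in(\Kh)^n$, while you correctly observe that at step $i$ Theorem~\ref{udp} only puts $\xi_i$ in the henselization of $\K(\xi_1,\dots,\xi_{i-1})$, and that this lands in $\Kh$ because $\K(\xi_1,\dots,\xi_{i-1})\subset\Kh$ and $\Kh$ is henselian, hence contains that henselization --- a small but genuine gap you filled.
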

\begin{proof}{Proof}
Let us explain how we get the test point.
We apply the quantifier elimination procedure to
$$Q^1_F x_1\dots Q^n_F x_n\ \Phi(\bar a,\bar x)$$
obtained after replacement of each $\alpha_i$ by a new indeterminate
$a_i$. The result is a quantifier-free formula $\Psi(\bar a)$, such that
$$Q^1_F x_1\dots Q^n_F x_n\ \Phi(\bar a,\bar x) \equi \Psi(\bar a)\>.$$
It suffices to test whether $\Psi(\bar \alpha)$ is true or not.

If all quantifiers $Q^i_F$ are existential, we can find formulas
$\Psi_{k}(\bar a,x_1,\dots,x_k)$ for $k=1$ to $n-1$, such that
$$
\begin{array}{cc}
&\exists_F\, x_1\dots\exists_F\, x_n\  \Phi(\bar a,x_1,\dots,x_n)\cr
\equi&\exists_F\, x_1\dots\exists_F\, x_{n-1}\ \Psi_{n-1}(\bar
a,x_1,\dots,x_{n-1})\cr
\vdots&\vdots\cr
\equi&\exists_F\, x_1\ \Psi_1 (\bar a,x_1)\cr
\equi&\Psi(\bar a)\cr
\end{array}
$$

If $\Psi(\bar\alpha)$ is true and we apply the decision procedure of
theorem~\ref{udp} to the sentence $\exists_F\, x_1\ \Psi_1
(\bar\alpha,x_1)$,
we find $\xi_1\in\Kac$ such that $\Psi_1 (\bar\alpha,\xi_1)$ holds. We
apply again the decision procedure to $\exists_F\, x_2\ \Psi_2
(\bar\alpha,\xi_1,x_2)$ and we find $\xi_2\in\Kac$ such that $\Psi_2
(\bar\alpha,\xi_1,\xi_2)$ holds, and so on. In this way, we find
$\xi_1,\dots,\xi_n\in\Kac$ such that
$\Phi(\bar\alpha,\xi_1,\dots,\xi_n)$ holds.

If the $n$-tuple $(\xi_1,\dots,\xi_n)$ satisfying
$\Phi(\bar\alpha,x_1,\dots,x_n)$ is unique, then $\xi_1$ satisfying
$\Psi_1(\bar\alpha ,x_1)$ is unique and theorem~\ref{udp} shows that
$\xi_1\in\Kh$. Repeating this argument $n$ times, we conclude that, in
this case, $\xi_1,\dots,\xi_n\in\Kh$.
\end{proof}

%
%





\addcontentsline{toc}{section}{References}



\tableofcontents

\end{document}